\documentclass[reqno, 12pt]{amsart}
\usepackage{graphicx}
\usepackage{amsmath,amssymb,amscd,mathtools}
\usepackage[centering,textwidth=6.5in,textheight=9in,marginpar=0.7in]{geometry}
\usepackage[scr]{rsfso}

\usepackage{textcomp}

\usepackage{tikz}
\usetikzlibrary{math,angles,quotes,intersections,calc,through}


\usepackage[plainpages=false,colorlinks,hyperindex,pdfpagemode=None,bookmarksopen,linkcolor=blue,citecolor=blue,urlcolor=blue]{hyperref}

\parskip=.05in

\newtheorem{thm}{Theorem}[section]
\newtheorem{cor}[thm]{Corollary}

\newtheorem{pro}[thm]{Proposition}

\theoremstyle{definition}
\newtheorem{definition}[thm]{Definition}
\newtheorem{remark}[thm]{Remark}
\newtheorem{example}[thm]{Example}

\numberwithin{equation}{section}

\newcommand{\ta}{\tilde{a}}
\newcommand{\tb}{\tilde{b}}

\newcommand{\bC}{\mathbb{C}}

\newcommand{\bR}{\mathbb{R}}

\newcommand{\cE}{\mathcal{E}}

\newcommand{\cH}{\mathcal{H}}

\newcommand{\cO}{\mathcal{O}}
\newcommand{\cP}{\mathcal{P}}

\newcommand{\cW}{\mathcal{W}}
\newcommand{\cX}{\mathcal{X}}
\newcommand{\cY}{\mathcal{Y}}
\newcommand{\cZ}{\mathcal{Z}}

\newcommand{\sE}{\mathsf{E}}
\newcommand{\sF}{\mathsf{F}}

\newcommand{\eps}{\epsilon}

\newcommand{\pa}{\partial}

\newcommand{\ds}{\displaystyle}

\begin{document}

\title[Birkhoff Normal Form and Twist coefficients]{Birkhoff Normal Form and Twist Coefficients of Periodic Orbits of Billiards}

\author[X. Jin]{Xin Jin}
\address{Math Department, Boston College, Chestnut Hill, MA 02467.}
\email{xin.jin@bc.edu}
\thanks{X. J. is supported in part by  the NSF Grant DMS-1854232.}

\author[P. Zhang]{Pengfei Zhang}
\address{Department of Mathematics, University of Oklahoma, Norman, OK 73019.}
\email{pengfei.zhang@ou.edu}

\subjclass[2020]{37G05 37C83 37E40}

\keywords{billiards, Birkhoff normal form, twist coefficients, nonlinear stability}

\begin{abstract}
In this paper we study the Birkhoff Normal Form around elliptic periodic points
for a variety of dynamical billiards. We give an explicit construction
of the Birkhoff transformation and obtain explicit formulas
for the first two twist coefficients in terms of the geometric parameters of the billiard table. 
As an application, we obtain  characterizations of  the nonlinear
stability and local analytic integrability of the billiards around the  elliptic periodic points.
\end{abstract}

\maketitle

\section{Introduction}

Let $U\subseteq \bR^2$ be an open neighborhood of the origin $P=(0,0) \in \bR^2$,
$f:U\to \bR^2$ be  a symplectic embedding (area-preserving and orientation-preserving)
with $f(P)=P$. Let $\lambda$ and $\lambda^{-1}$ be the eigenvalues of the tangent 
matrix $D_{P}f$. Then the fixed point $P$ is said to be 
hyperbolic if $|\lambda|\neq 1$, parabolic if $\lambda=\pm 1$, and  elliptic 
if $|\lambda|=1$ and $\lambda\neq \pm 1$.
An elliptic fixed point $P$ is said to be non-resonant
if $\lambda^n \neq 1$ for any $n \ge 3$.
If $P$ is non-resonant, then for any $N\ge 1$, there exist an open neighborhood
$U_N\subset U$ of $P$ and a symplectic 
transformation
\begin{align}
h_N: U_N \to \bR^2, \begin{bmatrix} x \\ y \end{bmatrix}
\mapsto 
\begin{bmatrix} x+p_2(x,y) + \cdots + p_{2N+1}(x,y) \\ y+ q_2(x,y) + \cdots + q_{2N+1}(x,y) \end{bmatrix}
+O(r^{2N+2}), \label{formal}
\end{align}
where $p_n$ and $q_n$ are polynomials of degree $n$ for each $2\le n\le 2N+1$, 
such that
\begin{align}
h_N^{-1}\circ f\circ  h_N \Big(\begin{bmatrix} x \\ y \end{bmatrix} \Big)
=\begin{bmatrix} \cos \Theta(r^2) & -\sin\Theta(r^2) \\ \sin \Theta(r^2) & \cos\Theta(r^2)\end{bmatrix}\begin{bmatrix} x \\ y \end{bmatrix}
+ O(r^{2N+2}), \label{BNF}
\end{align} 
where  $r^2=x^2+y^2$,
$\Theta(r^2)=\theta+\tau_1 r^2+\tau_2 r^4+\cdots +\tau_N r^{2N}$,
and $\theta$ is  the argument of the eigenvalue $\lambda$.
The function $\Theta(r^2)$ measures the amount of rotations 
of points around the fixed point $P$. 
The symplectic transformation \eqref{formal} is called Birkhoff transformation, 
the resulting form \eqref{BNF} is called Birkhoff Normal Form,
and the coefficient $\tau_k$ in the function $\Theta(r^2)$  is called
the $k$-th twist coefficient of $f$ at $P$ for each $k\ge 1$.
In fact,  the above Birkhoff Normalization works
as long as the elliptic fixed point $P$ is not $(2N+2)$-resonant for some $N\ge 1$. 
That is, $\lambda^{n}\neq 1$ for each $3\le n \le 2N+2$.
The same results hold for elliptic periodic orbits.
See \cite{SiMo} for more details.
Normal form also exists for hyperbolic periodic points, see \cite{Mos56} for more details.

The existence of  Birkhoff Normal Form 
plays an important role in the study of nonlinear stability 
and local integrability of elliptic fixed points. 
Recall that a fixed point $P$ of a map $f: U \to \bR^2$ is said to be nonlinearly stable
if there is a nesting sequence of $f$-invariant neighborhoods $U_n$ of $P$, $n\ge 1$
whose boundaries $S_n =\pa U_n$ are invariant circles,
such that $\bigcap_{n\ge 1}U_n=\{P\}$.
It is said to be locally smoothly/analytically integrable 
if there is an open neighborhood $V$ of $P$ 
that is foliated smoothly/analytically into invariant circles around $P$.
An elliptic fixed point being nonresonant alone does not guarantee it is nonlinearly 
stable.
In fact, Anosov and Katok \cite{AnKa} 
constructed an ergodic symplectic diffeomorphism $f$ of 
the closed unit disk $\bar{D}$ with a nonresonant elliptic 
fixed point at the origin $P\in \bar{D}$. On the other hand,
Moser's Twist Mapping Theorem \cite[Theorem 2.13]{Mos73}  states that
if the elliptic fixed point $P$ is not $(2N+2)$-resonant, 
then  the fixed point $P$ is nonlinearly stable
as long as one of its first $N$ twist coefficients is nonzero\footnote{
It is noteworthy that an elliptic fixed point is nonlinearly stable
if it satisfies stronger arithmetic conditions such as the Bruno condition
and the Diophantine condition, 
see \cite{Rus02, FK09} for more details.}.

There have been many results about the twist coefficients 
(especially the first twist coefficient $\tau_1$) at elliptic periodic points.
Meyer \cite{Mey70} studied the generic bifurcation
of periodic points using the property that the first twist coefficient is nonzero
for generic irrational elliptic periodic points.
Meyer \cite{Mey71} studied the generic stability properties
of periodic points using the property that at least one of the first two twist coefficients is nonzero
for a generic family of irrational elliptic periodic points.
Moeckel \cite{Moe90} studied the behavior of the first twist coefficient $\tau_1(\phi_a, P)$ 
of an elliptic fixed point $P$
for a generic one-parameter family $(\phi_a)$ of area-preserving diffeomorphisms
and characterized the orders of the poles of $\tau_1(\phi_a, P)$ at resonances.
See also \cite{Tor02} for an application to a Hill's equation with singular term. 
The twisting properties also have applications in dynamical billiards.
Dias Carneiro, Kamphorst and Pinto de Carvalho \cite{KP01, DKP03} 
considered the periodic orbit $\cO_2=\{P, F(P)\}$
of period 2 of dynamical billiards and obtained a formula for  $\tau_1(F^2, P)$, 
where $F$ is the (locally defined) billiard map. 
Moreover, they proved the existence of an elliptic island by a small normal perturbation 
of the billiard table along an elliptic periodic orbit of period $2$.
The same result for elliptic periodic orbits of higher periods is proved by Bunimovich and Grigo
\cite{BuGr}.
An explicit formula of $\tau_1(F^2, P)$ in terms of the geometric terms of the billiard table
is given in \cite{KP05}.

In this paper we study the  Birkhoff transformation and the first two twist coefficients of 
certain periodic orbits of (locally defined) billiards.
Let $a(t)=\sum_{n\ge 1}a_{2n} t^{2n}$ and $b(t)=\sum_{n\ge 1} b_{2n} t^{2n}$, 
$t\in (-\eps, \eps)$  for some $\eps>0$, be two even functions,
$\gamma_0(t)=(a(t), t)$ and $\gamma_1(t)=(L -b(t), -t)$
be two parametrized curves on the plane $\bR^2$, see Fig.~\ref{btable}. 
Denote by $Q(L, a, b)$,   the domain bounded by the two curves $\gamma_j$, $j=0 ,1$, 
with their endpoints connected by two horizontal segments.
We will  consider the dynamical billiards on the domain  $Q(L, a, b)$.
Let $M$ be the phase space of the billiards, and $F: M\to M$ be the billiard map
that sends a light ray on the billiard table $Q(L,a,b)$ to the reflected ray upon hitting the boundary of the table. See \cite{ChMa} for more details about dynamical billiards. 
On the table $Q(L, a, b)$, there exists a periodic orbit of period $2$ 
bouncing back and forth between the two points $\gamma_0(0)$ and $\gamma_1(0)$.
Denote this orbit by $\cO_2=\{P,F(P)\}$,
where $P$ is the point with impact point at $\gamma_0(0)$.

\begin{remark}
The assumption of the two functions $a(t)$ and $b(t)$ being even simplifies our computations
for the two twist coefficients. Without this assumption, the twist coefficient $\tau_1$ will depend on both $R_j'(0)$ and $R_j''(0)$, $j=0, 1$, (see \cite{KP05} for example), and the twist coefficient $\tau_2$ will depend on $R_j^{(k)}(0)$ for all $0\le k \le 4$, $j=0, 1$. To have a taste of the  twist coefficient $\tau_2$ in the general case, see \cite{CGM}, in which the authors obtained $\cP_3(F)$, a quantity needed to compute $\tau_2$, that occupies three and a half pages of \cite{CGM}.
\end{remark}

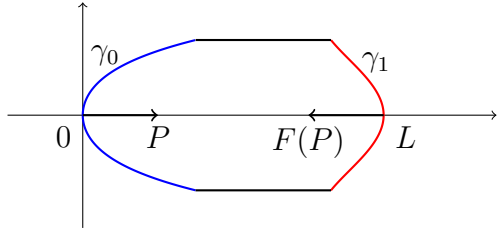
\begin{figure}[htbp]
\begin{tikzpicture}
\draw[->] (-1,0) -- (5.5,0);
\draw[->] (0,-1.5) -- (0,1.5);
\draw [domain=-1:1, samples=100, red, thick] plot ({4-\x*\x +0.3*\x*\x*\x*\x }, {\x});
\node at (3.9, 0.7) {$\gamma_1$};  
\draw [domain=-1:1, samples=100, blue, thick] plot ({\x*\x +0.5*\x*\x*\x*\x }, {\x});
\node at (0.3,0.8) {$\gamma_0$};
\draw[thick, ->] (0,0) node[below left]{$0$} -- (1,0) node[below]{$P$};
\draw[thick, ->] (4,0) node[below right]{$L$} -- (3,0) node[below]{$F(P)$};
\draw[thick] (3.3, 1) -- (1.5, 1) (3.3, -1) -- (1.5, -1);
\end{tikzpicture}
\caption{$\gamma_0(t)= (t^2 +0.5 t^4, t)$ and $\gamma_1(t)=(4 -  t^2 +0.3 t^4, -t)$, 
$t\in(-1, 1)$.}
\label{btable}
\end{figure}

Let  $s=s(t)$ be the arc-length parameter of the curve $\gamma_j$ with $s(0)=0$, 
$R_j(s)$ be the radius of curvature of the curve $\gamma_j$ at $\gamma_j(s)$,
and $R^{(k)}_j(s)$ be the $k$-th derivative of $R_j(s)$ with respect to $s$, $j\ge 0$. 
The quantity $L$ represents the length of the trajectory between two consecutive impacts
of the periodic orbit $\cO_2$. 
If we enlarge the table $Q=Q(L, a, b)$ by $k$ times and denote the scaled table by $kQ$, 
then the corresponding quantities $L$ and $R_j$, $j=0,1$, are enlarged by $k$ times.
In some sense, this scaling just adds a pair of glasses for us to observe 
 the dynamics on the same billiard table.
In particular, one can assume that $L=1$ without losing much generality. 
We will keep using $L$ and determine how the twist coefficients change 
with respect to the scaling from $Q$ to $kQ$. 
For convenience we introduce a short notation related to scaling the billiard table:
\begin{definition} \label{def.deg}
A quantity $\tau(Q)$ of the billiard table $Q$ is said to be homogeneous 
with respect to the scaling if there exists a constant $d\in \bR$
such that $\tau(kQ)=k^d \tau(Q)$. 
Denote it by $\deg(\tau)=d$, which is called the scaling degree of the quantity $\tau(Q)$. 
\end{definition}
It is easy to see that $\deg(L)=1$ and $\deg(R_{j}^{(k)}(0))=1-k$ for any $k\ge 0$, $j=0,1$. 
As we will see later, the twist coefficient $\tau_j$ is homogeneous 
with degree $\deg(\tau_j)=-j$, $j=1,2$.

\subsection{Symmetric billiards}
We  start with a special case with $R_0(s)=R_1(s)=:R(s)$.
In this case it is clear that $\tau_k(F^2, P)=2\tau_k(F,P)$ for all $k\ge 1$,
see \S\ref{sec.taylor} for more details.
We use the short notation that $R^{(k)}=R^{(k)}(0)$ for each $k\ge 0$.
In particular, the tangent matrix of the one-step map $F$ at the periodic point $P$ is given by 
\begin{align}
D_{P}F=\begin{bmatrix} 
\frac{L}{R} -1 &  L \\ 
\frac{L}{R^2} -\frac{2}{R} &  \frac{L}{R}-1
\end{bmatrix}.\label{DF}
\end{align}
It follows that the matrix $D_{P}F$ is hyperbolic if $\frac{L}{R}>2$,
is parabolic if $\frac{L}{R}=2$, and is elliptic if $0< \frac{L}{R}< 2$.

In the following we will assume  $0<\frac{L}{R}<2$ and say that the point $P$ is elliptic.
Note that the periodic orbit $\cO_2=\{P, F(P)\}$ is in fact parabolic when $\frac{L}{R}=1$.
The eigenvalues of the tangent matrix $D_{P}F$ at the periodic point $P$
are $\lambda$ and $\bar\lambda$, where
$\lambda = \frac{L}{R}-1 - i \sqrt{L(\frac{2}{R} -\frac{L}{R^2})}$. 
We will need the following nonresonance assumptions:
\begin{enumerate}
\item[(A1)] $\lambda^4 \neq 1$, or equally,
$\frac{L}{R}\in(0, 2)\backslash\{1\}$;

\item[(A2)] $\lambda^6 \neq 1$, or equally,
$\frac{L}{R}\in(0, 2)\backslash\{\frac{1}{2},\frac{3}{2}\}$.
\end{enumerate}
The assumption (A1) is needed for obtaining the normal form 
containing the first twist coefficient $\tau_1$
and both (A1) and (A2) are needed for obtaining the normal form containing the 
first two twist coefficients  $\tau_1$ and $\tau_2$.
\begin{thm}
Assuming (A1), 
the first twist coefficient $\tau_1(F,P)$ of the one-step billiard map $F$ at $P$ is  given by
\begin{align}
\tau_1(F,P)= &\frac{1}{8R} - \frac{L}{8(2R-L)}R''. \label{tau1.sym.F}
\end{align}
Assuming (A1) and (A2), 
the second twist coefficient $\tau_2(F,P)$ of the one-step billiard map $F$ at $P$ is given by
\begin{align}
\tau_2(F,P)=
&\frac{1}{64}\cdot \Big(\frac{3 (7 R^2 - 8 R L + 2 L^2)}{4 R^2 (R -  L) \sqrt{(2R-L)L}}
-\frac{\sqrt{L}(27 R^2 - 40 R L +  10 L^2)}{6 R (R - L) (2R - L)^{3/2}} R'' \nonumber \\
&+ \frac{L^{3/2} (31 R^2 - 36 R L +  6 L^2) }{12 (R - L) (2R - L)^{5/2}}(R'')^2
-\frac{L^{3/2} R}{3 (2R - L)^{3/2}}R^{(4)} \Big). \label{tau2.sym.F}
\end{align}
\end{thm}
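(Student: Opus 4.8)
The plan is to compute the Birkhoff normal form of $F$ (or rather $F^2$, then divide by $2$ using the symmetry $\tau_k(F^2,P)=2\tau_k(F,P)$) by following the classical algorithm described in \cite{SiMo}: first put the billiard map in a convenient canonical coordinate system adapted to the periodic point, expand it as a power series through the relevant order, conjugate $D_PF$ to a rotation by $\theta$, and then successively kill the nonresonant terms of degrees $3,4,5,6$ by polynomial symplectic changes of variables, reading off $\tau_1$ from the surviving degree-$3$ resonant part and $\tau_2$ from the degree-$5$ resonant part. Concretely, I would use the standard billiard coordinates $(s,p)$ where $s$ is arclength along the boundary and $p=\cos\varphi$ is the tangential momentum; near the $2$-periodic orbit one writes the generating function of $F$ (the chord length $L(s_0,s_1)$ between boundary points) and Taylor-expands it using the local data $R=R(0)$, $R''=R''(0)$, $R^{(4)}=R^{(4)}(0)$ of the curve. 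Since $a,b$ are even, all odd-order boundary data vanish, which is why only even derivatives $R'',R^{(4)}$ appear and why the expansion of the billiard map around $\cO_2$ contains no even-degree terms in a suitably symmetrized coordinate — this is the structural fact that makes $\tau_1$ depend only on $R,R''$ and $\tau_2$ only on $R,R'',R^{(4)}$.

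The key steps, in order, would be: (1) Write down $D_PF$ as in \eqref{DF}, verify the eigenvalue formula $\lambda=\frac LR-1-i\sqrt{L(\tfrac2R-\tfrac L{R^2})}$, hence $\cos\theta=\frac LR-1$, and record the linear symplectic normalizing matrix $C$ with $C^{-1}(D_PF)C$ equal to the rotation $R_\theta$; track how each entry scales so that the homogeneity claim $\deg(\tau_j)=-j$ falls out automatically. (2) Expand the billiard map $F$ (better: $F^2$) to order $5$ around $P$ in the normalized linear coordinates; this produces homogeneous vector-valued polynomials $F_2,F_3,F_4,F_5$. (3) Pass to complex coordinates $z=x+iy$, $\bar z=x-iy$, in which $R_\theta$ is diagonal, and run the Birkhoff normalization: solve the cohomological equation at each degree $n=2,3,4$ (for the map, degrees $3,4,5$) — the homological operator has eigenvalues $\lambda^{m-k-1}$ on the monomial $z^m\bar z^k e$-direction, which are nonzero precisely under (A1) for the degree-$3$ step and additionally (A2) for the degree-$5$ step, guaranteeing solvability off the resonant diagonal $m=k+1$. (4) Collect the resonant coefficients: the coefficient of $z^2\bar z$ gives $i\tau_1$ and the coefficient of $z^3\bar z^2$ gives $i\tau_2$ (with the appropriate normalization of $r^2=x^2+y^2=z\bar z$), and then translate back from the $F^2$-quantities to the $F$-quantities by halving.

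I expect the main obstacle to be the sheer bulk of the degree-$5$ computation for $\tau_2$: one must carry the Taylor expansion of the generating function / billiard map to fourth order in the boundary displacement while keeping $R''$- and $R^{(4)}$-terms, then compose two such maps to get $F^2$, then perform two successive polynomial conjugations (degree $3$ and degree $4$) whose composition feeds quadratically into the degree-$5$ resonant term — so the degree-$3$ normalization contributes to $\tau_2$ through its square, which is the source of the $(R'')^2$ term and of the denominators $(2R-L)^{5/2}$ and $R-L$ (the latter reflecting the parabolic degeneracy at $L/R=1$). Managing this without algebra errors is the real work; I would organize it by (i) exploiting the evenness of $a,b$ to discard all odd-order terms from the outset, (ii) using a symbolic computation to expand and compose, and (iii) sanity-checking via the scaling degrees ($\tau_1$ must be degree $-1$, $\tau_2$ degree $-2$), via consistency with the known formula for $\tau_1(F^2,P)$ from \cite{KP05}, and via the limiting behavior as $L/R\to0$ (where $\tau_1\to\frac1{8R}$, the value for a circle) and the blow-ups as $L/R\to1$ and $L/R\to2$ matching the resonances excluded by (A1),(A2).
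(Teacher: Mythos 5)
Your proposal is correct and follows essentially the same route as the paper: Taylor-expand the billiard map at $P$ to fifth order (the paper does this by direct recursive differentiation of the standard billiard derivative formulas rather than via the chord-length generating function, and works with the one-step map $F$ directly rather than with $F^2$, but these are cosmetic differences), conjugate $D_PF$ to a rotation, pass to $z=x+iy$, $w=x-iy$, solve the cohomological equations at degrees $3$ and $5$ under (A1) and (A1)--(A2) respectively, and read $\tau_1$ from the coefficient $c_{21}=i\tau_1$ of $z^2w$ and $\tau_2$ from the imaginary part of the coefficient $C_{32}=-\tfrac{\tau_1^2}{2}+i\tau_2$ of $z^3w^2$ (note that this coefficient is not purely imaginary; its real part is pinned down by the symplectic condition). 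Your identification of the structural points --- evenness of $R(s)$ killing the even-degree terms, the quadratic feedback of the degree-$3$ conjugation into the degree-$5$ resonant term producing $(R'')^2$, and the scaling-degree and resonance-location sanity checks --- matches the paper's argument.
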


\begin{remark}
Consider the dynamical billiards inside the ellipse 
$\frac{x^2}{b^2}+y^2 =1$, where $0<b<1$. 
It is a classical result that the elliptic billiard is completely integrable: 
the whole phase space  is foliated  into invariant curves.
See \cite[\S1.4]{ChMa}.
Moreover, the periodic orbit $\cO_2$ along the minor axis ($x$-axis) of the ellipse
is elliptic and 
is surrounded by $F^2$-invariant circles, 
on which the restriction of $F^2$ acts as circle diffeomorphisms.
So we can define the rotation number of $F^2$ on each of these invariant circles. 
An exact formula of the rotation number was obtained by Ko{\l}odziej \cite{Kol85},
see also \S\ref{sec.Kol}. 
Tabanov \cite{Tab94} obtained formulas of the rotation numbers using
the action-angle coordinates of elliptic billiards.
On the other hand,
one can get an approximation of the rotation number 
for the invariant circles around the orbit $\cO_2$
using the twist coefficients $\tau_1$ and $\tau_2$.
In \S \ref{sec.Kol} we show that this approximation is as accurate as it can be: 
it matches Ko{\l}odziej's formula  up to the 4th-order.
\end{remark}

\subsection{Applications} \label{tau1.app}
In this subsection we give some applications of the Birkhoff Normal Form
for the periodic orbit $\cO_2$ on a variety of  symmetric billiards.
We first introduce a few quantities that are needed below
and postpone the general discussion to \S \ref{sec.tau1}.
Let $z$ and $w$ be the complex coordinates that span the eigenspaces of the eigenvalues $\lambda$ and $\bar\lambda$ of the 
tangent matrix $D_P F$, respectively,
and $c_{jk}$, $j \ge 0$, $k\ge 0$, be the coefficients of the Taylor series of the 
billiard map $F$ at $(z,w)=(0,0)$, see \S \ref{tau1.complex}.
Let $d_{jk}$, $j+k=3$, be the coefficients of the first-step Birkhoff transformation
in the complex coordinate $(z,w)$, see \S \ref{tau1.intermediate}.
For example, $d_{03}=\frac{c_{03}}{1-\overline{\lambda}^4}$.
Therefore, in the resonance case $\lambda^4=1$,  $c_{03}\neq 0$
is an obstruction for the existence of  Birkhoff transformation for $F$ at the elliptic point $P$, see Remark \ref{loc.ana.int}.

\begin{example}\label{ex.ell.tau1}
Let $0<b<1$. Consider  the  billiards inside the ellipse 
$\frac{x^2}{b^2}+y^2 =1$ 
and the periodic orbit $\cO_2=\{P, F(P)\}$ along the minor axis ($x$-axis) of the ellipse. 
Then $L=2b$, $R=\frac{1}{b}$, and  $P$ is an elliptic point for the billiard map 
with an eigenvalue $\lambda=2 b^2 -1- i 2b\sqrt{1-b^2}$.
The nonresonance assumption (A1) holds for $b\in (0,1)\backslash\{\frac{1}{\sqrt{2}}\}$.
Since  $R''=\frac{3(b^2-1)}{b}$, we get  $\tau_1(F,P)= \frac{b}{2}$ from \eqref{tau1.sym.F}.
It follows from  Moser's Twist Mapping Theorem
that the periodic orbit $\cO_2$ is nonlinearly stable.
This is indeed consistent with the fact that the elliptic billiard is integrable.
\end{example}

\begin{remark}\label{ellipse.resonance}
Consider the resonance case $b=\frac{1}{\sqrt{2}}$ of the elliptic billiards that is excluded 
from Example \ref{ex.ell.tau1}.
Note that the coefficient $c_{03}(b)$  is given by
\begin{align}
c_{03}(b)=-\frac{\bar{\lambda}}{6}b (1-2b^2)(3- 4b^2)(2b\sqrt{1-b^2} +i(1-2b^2)).
\end{align}
In particular, $c_{03}(b)$ vanishes at $b=\frac{1}{\sqrt{2}}$. 
It follows that there is no obstruction for the existence of the first-step Birkhoff transformation.
One can simply set $d_{03}(\frac{1}{\sqrt{2}})=0$. This does work, 
just the corresponding Birkhoff transformation
does not depend continuously on the parameter $b$ at $b=\frac{1}{\sqrt{2}}$.
A canonical approach is to let the factor $(1-2b^2)$ of $c_{03}(b)$ 
cancel the factor  $(1-2b^2)^{-1}$ in $\frac{1}{1-\overline{\lambda}^4}$ when defining $d_{03}$.
In this way we obtain:
\begin{align}
d_{03}(b)=\frac{(-3 + 4 b^2)(1 - 8 b^2 + 8 b^4 - i 4b(1-b^2)^{1/2}(1-2b^2))^2}{48(1-b^2)^{1/2}},
\end{align}
which is well defined and continuous for all $b\in(0,1)$.
Therefore, Moser's Twist Mapping Theorem is applicable for the periodic orbit $\cO_2$ for all elliptic billiards. 
\end{remark}

\begin{example}\label{ex.lemon}
Consider the lemon billiards introduced in  \cite{HeTo}.
Recall that a lemon $Q(L)$ is defined as the domain bounded by two  circular arcs $\gamma_j$, $j=0,1$ of radius $R=1$,
where $L$ measures the distance between $\gamma_0(0)$ and $\gamma_1(0)$,
see Fig.~\ref{lemon}. 
Let $\cO_2=\{P,F(P)\}$ be the periodic orbit bouncing back and forth between $\gamma_0(0)$ and $\gamma_1(0)$. 
Then the point $P$ is elliptic for the billiard map $F$ when $L\in (0,2)$,
for which the eigenvalue is $\lambda=L-1- i \sqrt{L(2-L)}$.
It satisfies  the nonresonant assumption (A1)  when $L\in (0,2)\backslash \{1\}$.
Applying \eqref{tau1.sym.F}, we get  $\tau_1(F,P)= \frac{1}{8}$, 
and the periodic orbit $\cO_2$ is nonlinearly stable whenever $L\in(0,2)\backslash \{1\}$.
\end{example}

\begin{remark}\label{re.lemon}
Consider the resonance case $L=1$ of the lemon billiards $Q(L)$ that is excluded 
from Example \ref{ex.lemon}.  
Note that $c_{03}(1)= -\frac{i}{8}\neq 0$, which is an obstruction for 
the existence of the first-step Birkhoff transformation for the lemon billiards $Q(1)$.
Note that the point $\gamma_{j}(0)$ is the center of the other circular arc $\gamma_{1-j}$,
$j=0,1$, and all the orbits hitting $\gamma_{j}(0)$ on $\gamma_{1-j}$ are periodic points
of period $4$, see Fig.~\ref{lemon}. Therefore, there is no twist at all around the 
periodic orbit $\cO_2$.
Numerical results in \cite{CMZZ} suggest that the billiard map on $Q(1)$ is in fact ergodic.
In particular, the periodic orbit $\cO_2$ is not nonlinearly stable.
\end{remark}

\begin{figure}[htbp]
\tikzmath{
\r=2;
\a=sqrt(3);
}
\begin{tikzpicture}
\draw[domain=-60:60, thick , samples=100] plot ({\r*cos(\x)}, {\r*sin(\x)});
\draw[domain=120:240, thick, samples=100] plot ({\r+\r*cos(\x)}, {\r*sin(\x)});
\fill (0,0) node[left]{$\gamma_{0}$} circle[radius=0.05];
\fill (2,0) node[right]{$\gamma_{1}$} circle[radius=0.05];
\draw[thick] (0,0) -- (2, 0);
\draw[dashed, thick] (0,0) -- ({\r*cos(15)}, {\r*sin(15)}) node[pos=0.3](A){}; 
\draw[dashed, thick] (0,0) -- ({\r*cos(-15)}, {\r*sin(-15)}) node[pos=0.3](B){};
\draw[dashed, thick] (\r,0) --  ({\r-\r*cos(15)}, {\r*sin(15)})  node[pos=0.7](C){}; 
\draw[dashed, thick] (\r,0) -- ({\r-\r*cos(-15)}, {\r*sin(-15)}) node[pos=0.7](D){};
\draw[blue, thick,->]  (0,0) -- (A);
\draw[blue, thick,->]  (0,0) -- (B);
\draw[red, thick,->]  ({\r-\r*cos(15)}, {\r*sin(15)}) -- (C);
\draw[red, thick,->]  ({\r-\r*cos(-15)}, {\r*sin(-15)}) -- (D);
\end{tikzpicture}
\quad
\begin{tikzpicture}
\fill (0,0) node[below right]{$P$} circle[radius=0.05];
\draw[blue, thick] (0, -\a) -- (0, \a) node[left]{$u$};
\draw[red, thick] (-\a, 0) -- (\a, 0) node[below] {$s$};
\end{tikzpicture}
\caption{The lemon table $Q(1)$ with two families of periodic points of period 4.
Left: periodic points on the table; right: periodic points on the phase space.}\label{lemon}
\end{figure}
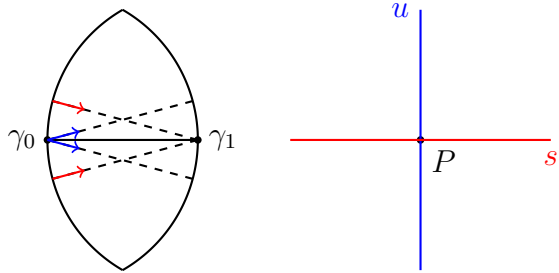

Note that the above discussion is closely related to 
the local analytic integrability of the billiard map $F$ around the periodic orbit $\cO_2$. 
Recall that
\begin{pro}\label{pro.equi}
Let $f: U\to \bR^2$ be an analytic symplectic embedding 
and $P \in U$ be an elliptic fixed point of $f$. 
Then $f$ is locally analytically integrable around $P$ if and only if 
$f$ admits an analytic Birkhoff Normal Form via 
an analytic Birkhoff transformation around the point $P$.
\end{pro}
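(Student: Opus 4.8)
The plan is to prove Proposition~\ref{pro.equi} by establishing the two implications separately, the nontrivial direction being that local analytic integrability forces the formal Birkhoff normalization to converge.

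\textbf{The easy direction.} Suppose $f$ admits an analytic Birkhoff normal form via an analytic Birkhoff transformation $h$ near $P$. Then in the new coordinates $f$ is a twist map of the form \eqref{BNF} (with the $O(r^{2N+1})$ error absent, since the series converges), i.e. a genuine rotation $(x,y)\mapsto (x\cos\Theta(r^2)-y\sin\Theta(r^2), x\sin\Theta(r^2)+y\cos\Theta(r^2))$ with $\Theta$ analytic in $r^2$. The level sets $\{r^2 = \text{const}\}$ are then invariant circles depending analytically on the parameter $r^2$, so they foliate a neighborhood of the origin analytically; pulling this foliation back by $h^{-1}$ gives an analytic foliation of a neighborhood of $P$ into $f$-invariant circles. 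Hence $f$ is locally analytically integrable.

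\textbf{The hard direction.} Suppose $f$ is locally analytically integrable: there is an analytic function $G$ near $P$, with $dG(P)=0$ and $G$ having a nondegenerate (definite) critical point at $P$, whose level sets are $f$-invariant circles filling a neighborhood of $P$. First I would normalize so that $P$ is the origin and, after an affine symplectic change, $D_Pf$ is the rotation by $\theta$; then $G$ has the form $G = \tfrac12 c\, r^2 + (\text{higher order})$ with $c\neq 0$, and by the analytic Morse lemma (equivariant version, using that $G\circ f = G$) one can find an analytic, $f$-equivariant change of coordinates making $G$ exactly a function of $r^2$. At this point $f$ preserves every circle $\{r^2 = \rho\}$, so in polar-type symplectic coordinates $(\rho,\phi)$ with $\rho = \tfrac12 r^2$, $f$ acts as $(\rho,\phi)\mapsto(\rho, \phi + \omega(\rho,\phi))$. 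Area preservation plus the fact that each circle is invariant forces $\omega$ to be independent of $\phi$ after one more analytic $\phi$-shift: indeed, writing $f$ as the time-one map is not available, but one argues directly that an area-preserving analytic map fixing each level circle of $\rho$ is analytically conjugate, by a map of the form $(\rho,\phi)\mapsto(\rho,\phi+g(\rho,\phi))$, to a rigid rotation $(\rho,\phi)\mapsto(\rho,\phi+\Theta(\rho))$ on each circle — this is the classical statement that an area-preserving circle map with no rotation-number obstruction (here the circles are already invariant and carry a well-defined rotation number analytic in $\rho$) is smoothly, and in the analytic category analytically, linearizable in the presence of the invariant fibration. Unwinding $\rho = \tfrac12(x^2+y^2)$ and $\phi = \arg(x+iy)$ recovers precisely the normal form \eqref{BNF} with convergent $\Theta(r^2)$, and the composition of all the analytic coordinate changes used is the desired analytic Birkhoff transformation $h$; by uniqueness of the formal Birkhoff normal form, $h$ agrees with the formal one, so the formal series \eqref{formal} converges.

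\textbf{Main obstacle.} The delicate point is the last step: upgrading ``$f$ preserves an analytic fibration by circles on which it acts as an orientation-preserving circle diffeomorphism'' to ``$f$ is analytically conjugate, respecting the fibration, to rigid rotation on each fiber.'' The subtlety is that the rotation number $\Theta(\rho)$ need not be Diophantine for every $\rho$, so one cannot invoke KAM-type linearization fiber by fiber; instead one must use that the conjugating change is forced to be analytic jointly in $(\rho,\phi)$ by the analyticity of $G$, $f$, and the invariant foliation together with area-preservation — essentially a relative (parametrized) version of the statement that an area-preserving analytic map of an annulus preserving each concentric circle is analytically conjugate to a family of rotations. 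I would handle this either by a direct generating-function argument — writing $f$ via an analytic generating function $S(\rho,\phi')$ adapted to the invariant fibration and solving the resulting cohomological equation, which has no small-divisor problem because the fibration is already invariant — or by appealing to the structure of the analytic integral $G$ to produce action-angle coordinates $(\rho,\psi)$ in which $f$ is $(\rho,\psi)\mapsto(\rho,\psi+\Theta(\rho))$ by construction. Either way, the outcome feeds back to show the formal normalization converges, completing the equivalence.
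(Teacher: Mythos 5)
The paper itself does not prove Proposition~\ref{pro.equi}: it is stated as a recalled fact, with the substance deferred to the literature on convergence of Birkhoff normal forms for analytically integrable systems \cite{Ito89, Ito92, KKN98, Zun05}. So your write-up has to stand on its own. Your easy direction is correct and standard: a convergent normal form makes $h^{-1}fh$ an exact rotation by $\Theta(r^2)$, whose level circles pull back under the analytic $h$ to an analytic invariant foliation.

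For the hard direction your architecture (integral $\to$ action-angle coordinates $\to$ area preservation forces rigid rotation on each leaf $\to$ convergent normal form) is the right one, but the weight is distributed wrongly. (i) The step ``by the analytic Morse lemma (equivariant version) one can find an analytic, $f$-equivariant change of coordinates making $G$ exactly a function of $r^2$'' is not a citable lemma: the Morse lemma gives a normalization that is neither symplectic nor compatible with $f$, and producing coordinates that are simultaneously analytic, \emph{symplectic}, and adapted to the invariant foliation is exactly the nontrivial content. You also silently replace the paper's definition of integrability (an analytic foliation into invariant circles) by the existence of an analytic first integral with a definite nondegenerate critical point; that reduction needs an argument (e.g.\ taking the enclosed area of the leaf as the integral and proving its analyticity and nondegeneracy at $P$). (ii) Conversely, the step you single out as the ``main obstacle'' is immediate once (i) is done: if $(\rho,\phi)$ are coordinates with area form $d\rho\wedge d\phi$ in which the invariant circles are the level sets of $\rho$, then $f(\rho,\phi)=(\rho,g(\rho,\phi))$ has Jacobian $\partial g/\partial\phi$, so area preservation forces $g(\rho,\phi)=\phi+\Theta(\rho)$ outright --- no linearization theorem, cohomological equation, or discussion of rotation numbers is needed. (iii) The genuinely delicate point, which your sketch dispatches in one clause (``unwinding $\rho=\tfrac{1}{2}(x^2+y^2)$\,\dots''), is analyticity \emph{at the fixed point}: action-angle coordinates degenerate at $P$, and one must show that the action vanishes to exactly second order there and that $(\rho,\phi)\mapsto(\sqrt{2\rho}\cos\phi,\sqrt{2\rho}\sin\phi)$, composed with the preceding changes, extends to an analytic symplectic map across the origin. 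Until (i) and (iii) are supplied the hard direction is a plausible outline rather than a proof; as written it puts its effort into a non-issue and elides the two places where the analytic category actually bites.
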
 
See \cite{Ito89, Ito92, KKN98, Zun05} for general results about local analytic integrability
about Hamiltonian systems. Generally speaking, the Birkhoff Normal Form is divergent,
see \cite{Kir19} for more details.

Let us go back to the lemon billiards with $L=1$.
Since $\lambda^4=1$ and $c_{03}\neq 0$,  
the first-step Birkhoff transformation does not exist.
In other words, the billiard map $F$ does not admit 
any analytic normalization around the point $P$.
It follows from Proposition \ref{pro.equi} that 
the lemon billiard $Q(1)$ is not locally analytically integrable around the periodic orbit $\cO_2$.

\begin{example}\label{ex.tau1.general}
Consider the general case that $\gamma_0(t)=(a(t), t)$ and $\gamma_1(t)=(1-a(t), -t)$,
where $a(t)=a_2 t^2 + a_4 t^4 + a_6 t^6 +\cdots $, $-\eps < t< \eps$.
The curvature $R(s)$ satisfies
$R=\frac{1}{2a_2}$ and $R''=6a_2 - \frac{6a_4}{a_2^2}$.
The point $P$ is elliptic when $a_2\in(0, 1)$, for which the eigenvalue is
$\lambda=2a-1-i\sqrt{a-a^2}$.
It satisfies the assumption (A1) when $a_2\in(0, 1)\backslash\{\frac{1}{2}\}$.
It follows from \eqref{tau1.sym.F} that $\tau_1(F,P)=0$ if and only if  
$R''=2-\frac{1}{R}$, or equally, 
\begin{align}
a_4= \frac{1}{3}a_2^2(4a_2 - 1).
\end{align}
Therefore, for each $a_2\in(0, 1)\backslash\{\frac{1}{2}\}$,
the periodic orbit $\cO_2$ of the billiard system is nonlinearly stable 
as long as $a_4 \neq \frac{1}{3}a_2^2(4a_2-1)$. 
\end{example}

\begin{remark}
Consider the resonance case that $a_2=\frac{1}{2}$.
Then $R=L=1$ and the coefficient $c_{03}$  is given by
\begin{align}
c_{03}=-\frac{\bar{\lambda}}{24 }(3 + R'').
\end{align}
There are two subcases:
\begin{enumerate}
\item $R''= -3$, or equally, $a_4= a_2^2(a_2 +\frac{1}{2})=\frac{1}{4}$:
there exists a first-step Birkhoff transformation
(although not unique, see the discussion in Remark \ref{ellipse.resonance}).
It follows that the  periodic orbit $\cO_2$ is nonlinearly stable 
since  $\tau_1(F,P)=\frac{1}{2}$.

\item $R'' \neq -3$, or equally, $a_4 \neq \frac{1}{4}$:
then $c_{03}\neq 0$, which is an obstruction for 
the existence of the  first-step Birkhoff transformation for the symmetric billiards.
By Proposition \ref{pro.equi}, 
the  billiard map is not locally analytically integrable around the orbit $\cO_2$.
\end{enumerate}
\end{remark}

\noindent\textbf{Example \ref{ex.tau1.general} (continued).}
We have studied the case when $\tau_1 \neq 0$.
Now we deal with the remaining case  that  $\tau_1=0$, 
or equally, $R'' = 2-\frac{1}{R}$.
Suppose $a_2\in(0, 1)\backslash\{\frac{1}{4},\frac{1}{2},\frac{3}{4}\}$.
Substituting  $R''=2-\frac{1}{R}$ into the formula \eqref{tau2.sym.F}, we get
\begin{align}
\tau_2=\frac{(2R-1)^{1/2}(5R-1)}{192(R-1)R^2}
-\frac{R}{192(2R-1)^{3/2}}R^{(4)}.
\end{align}
It follows that $\tau_2=0$ if and only if $R^{(4)}=\frac{(2R- 1)^2(5R-1)}{(R-1)R^3}$.

Note that the radius of curvature $R(s)$ of the curve $\gamma_0$  satisfies
$R^{(4)}=-\frac{12 (2 a_2^6 + 4 a_2^3 a_4 - 36 a_4^2 + 15 a_2 a_6)}{a_2^3}$.
The assumption $\tau_1=0$ implies that $a_4 = \frac{1}{3}a_2^2(4a_2-1)$.
Therefore, $\tau_2=0$ if and only if
\begin{align}
a_6=\frac{2 a_2^3 (-1 + 46 a_2 - 168 a_2^2 + 168 a_2^3)}{45(2 a_2- 1)}.
\end{align}
Collecting terms, we have
\begin{cor}
Let $a_2\in(0, 1)\backslash\{\frac{1}{4},\frac{1}{2},\frac{3}{4}\}$,
$a(t)=a_2 t^2 + a_4 t^4 + a_6 t^6 + \cdots$, and $\cO_2$
be the periodic orbit of period $2$ of the billiard map on the domain $Q(1,a, a)$.
Then the periodic orbit $\cO_2$ is nonlinearly stable
if either  $a_4 \neq \frac{1}{3}a_2^2(4a_2-1)$
or $a_6\neq \frac{2 a_2^3 (-1 + 46 a_2 - 168 a_2^2 + 168 a_2^3)}{45(2 a_2- 1)}$. 
\end{cor}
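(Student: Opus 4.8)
The plan is to deduce the Corollary by assembling three ingredients already in place: Moser's Twist Mapping Theorem (stated in \S1), the formulas \eqref{tau1.sym.F}--\eqref{tau2.sym.F} of the Theorem for symmetric tables, and the explicit twist computations carried out in Example~\ref{ex.tau1.general}. The first move is to recognize that the hypothesis on $a_2$ is precisely the pair of nonresonance conditions (A1)--(A2). Here $L=1$ and, for $\gamma_0(t)=(a(t),t)$, the radius of curvature at the base point is $R=\frac{1}{2a_2}$, so $\frac{L}{R}=2a_2$; hence $P$ is elliptic for $F$ (because $0<\frac{L}{R}<2$), and $\frac{L}{R}\neq 1\iff a_2\neq\frac12$ while $\frac{L}{R}\notin\{\frac12,\frac32\}\iff a_2\notin\{\frac14,\frac34\}$. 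Thus $a_2\in(0,1)\setminus\{\frac14,\frac12,\frac34\}$ is exactly the range in which the Theorem's formulas are valid. Since the table is symmetric ($R_0=R_1=R$), \S\ref{sec.taylor} gives $\tau_k(F^2,P)=2\,\tau_k(F,P)$, so it suffices to show $(\tau_1(F,P),\tau_2(F,P))\neq(0,0)$ under the hypothesis.

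Next I would split on the first clause of the disjunction. If $a_4\neq\frac13 a_2^2(4a_2-1)$, then substituting $R=\frac{1}{2a_2}$ and $R''=6a_2-\frac{6a_4}{a_2^2}$ into \eqref{tau1.sym.F} (the computation in Example~\ref{ex.tau1.general}) gives $\tau_1(F,P)\neq 0$, and we are done. Otherwise $a_4=\frac13 a_2^2(4a_2-1)$, equivalently $R''=2-\frac1R$, so $\tau_1(F,P)=0$; then the hypothesis forces the second clause, $a_6\neq\frac{2a_2^3(-1+46a_2-168a_2^2+168a_2^3)}{45(2a_2-1)}$. Plugging $R''=2-\frac1R$ into \eqref{tau2.sym.F} collapses it to
\[
\tau_2(F,P)=\frac{(2R-1)^{1/2}(5R-1)}{192(R-1)R^2}-\frac{R}{192(2R-1)^{3/2}}R^{(4)},
\]
and inserting the expression for $R^{(4)}$ in terms of $a_2,a_4,a_6$ recorded in Example~\ref{ex.tau1.general} together with the constraint $a_4=\frac13 a_2^2(4a_2-1)$, one finds that $\tau_2(F,P)=0$ is equivalent to exactly $a_6=\frac{2a_2^3(-1+46a_2-168a_2^2+168a_2^3)}{45(2a_2-1)}$; all denominators cleared in the process ($R-1$, $R$, $2a_2-1$, $2R-1$) are nonzero for $a_2\in(0,1)\setminus\{\frac14,\frac12,\frac34\}$. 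Since this value of $a_6$ is excluded, $\tau_2(F,P)\neq 0$. In either case one of the first two twist coefficients of $F^2$ at $P$ is nonzero, and since (A1)--(A2) hold the Birkhoff normal form of $F^2$ (equivalently of $F$) containing $\tau_1$ and $\tau_2$ exists; Moser's Twist Mapping Theorem with $N=2$ then shows that $P$, and hence $\cO_2$, is nonlinearly stable.

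I expect the only step requiring genuine care to be the appeal to Moser's theorem: one must confirm that $a_2\in(0,1)\setminus\{\frac14,\frac12,\frac34\}$ really does provide the resonance hypothesis needed at second order. Writing $\lambda=e^{i\theta}$ with $\cos\theta=2a_2-1$ (so $P$ is elliptic and not parabolic throughout $(0,1)$), one checks that $\lambda^3=1$, $\lambda^4=1$, $\lambda^6=1$ occur exactly at $a_2=\frac14,\frac12,\frac34$ respectively; the one delicate point is that the odd resonance $\lambda^5=1$ — which occurs at $a_2=\frac{3\pm\sqrt5}{8}\notin\{\frac14,\frac12,\frac34\}$ — does not obstruct the normalization, which is precisely why (A1)--(A2) are the conditions attached to the Theorem and is built into the reversible/symmetric construction of \S\ref{tau1.symp}. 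Beyond this bookkeeping, the remaining work is the routine substitution of the curvature expansions of $\gamma_0$ into \eqref{tau1.sym.F}--\eqref{tau2.sym.F} already performed in Example~\ref{ex.tau1.general}.
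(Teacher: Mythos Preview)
Your argument is correct and matches the paper's one-line proof: show that the hypothesis forces $(\tau_1,\tau_2)\neq(0,0)$ via the computations already carried out in Example~\ref{ex.tau1.general} and its continuation, then invoke Moser's Twist Mapping Theorem. Your aside on the fifth resonance $\lambda^5=1$ is well-taken, though the precise reason it is harmless is the vanishing of all even-degree Taylor terms of $F$ at $P$ established in \S\ref{sec.taylor} (from the evenness of $R(s)$), rather than anything specific to the construction in \S\ref{tau1.symp}.
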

\begin{proof}
It suffices to note that either $\tau_1\neq 0$ or $\tau_2\neq 0$.
Then the result follows from Moser's Twist Mapping Theorem.
\end{proof}

\subsection{Asymmetric billiards}
Consider the general case that the function $a(t)$ for $\gamma_{0}(t)=(a(t),t)$ 
and the function $b(t)$ for $\gamma_{1}(t)=(L- b(t), -t)$ are different. 
Recall that we have set $R^{(j)}_k=R^{(j)}_k(0)$, $k=0, 1$, for each $j\ge 0$.
We will need the following nonresonance assumptions for
the iterate $F^2$ along the periodic orbit $\cO_2$:
\begin{enumerate}
\item[(B1)]  $\lambda^4\neq 1$, or equally,
$(\frac{L}{R_0}-1)(\frac{L}{R_1}-1) \in (0,1)\backslash\{\frac{1}{2}\}$;

\item[(B2)]  $\lambda^6\neq 1$, or equally,
$(\frac{L}{R_0}-1)(\frac{L}{R_1}-1) \in (0,1)\backslash\{\frac{1}{4}, \frac{3}{4}\}$.
\end{enumerate}

\begin{thm}
Assuming (B1), 
the first twist coefficient of the billiard map $F$ along the orbit $\cO_2=\{P, F(P)\}$ is given by
\begin{align}
\tau_1(F^2, P) &= \frac{1}{8} \Big(\frac{R_0+R_1}{R_0R_1}
-\frac{L}{R_0 +R_1 - L}\Big(\frac{R_1 -L}{R_0 -L}R_0'' + \frac{R_0 -L}{R_1 -L}R_1''\Big) \Big). \label{tau1.asym.F}
\end{align}
Let $k=0$ if $L<\min\{R_0, R_1\}$ and $k=1$  if $\max\{R_0, R_1\}<L< R_0+ R_1$.
Assuming (B1) and (B2),
the second twist coefficient of the billiard map $F$ along the orbit $\cO_2=\{P, F(P)\}$ is given by
\begin{align}
\tau_2
&=\frac{(-1)^k}{\sqrt{\Delta}}\Big(
\frac{3N(L,R_0,R_1)}{512R_0^2R_1^2 \Gamma}
+ \frac{P(L,R_0,R_1)(R_0'')^2+ P(L,R_1,R_0)(R_1'')^2}
{1536(R_0-L)^2(R_1 -L)^2(R_0 +R_1 -L)^2 \Gamma} \nonumber \\
&
+\frac{Q(L, R_0, R_1)R_0'' R_1''}{768 (R_{0} + R_{1}-L)^2 \Gamma}
- \frac{S(L,R_0,R_1)R_1 R_0''+ S(L,R_1,R_0) R_0 R_1''}
{768R_0R_1(R_0-L)(R_1 -L)(R_0 +R_1 -L) \Gamma} \nonumber  \\
& -\frac{T(L,R_0,R_1)(R_1 - L)R_0^{(4)}+ T(L,R_1,R_0)(R_0 -L) R_1^{(4)}}
{192(R_0-L)(R_1 -L)(R_0 +R_1 -L)}\Big), \label{tau2.asym.F}
\end{align}
where
$\Delta=L(R_0-L)(R_1 -L)(R_0 +R_1 -L)$, $\Gamma=2(R_0-L)(R_1 -L)-R_0R_1$, and
the  functions $N(L, R_0, R_1)$, $P(L, R_0, R_1)$, $Q(L, R_0, R_1)$, 
$S(L, R_0, R_1)$ and $T(L, R_0, R_1)$ are given by
\begin{align}
N(L, R_0, R_1)
=&8 L^4 (R_0^2 + R_1^2) - 16 L^3 (R_0^3 + 2 R_0^2 R_1 + 2 R_0 R_1^2 + R_1^3) \nonumber\\
&+ 8 L^2 (R_0 + R_1)^2 (R_0^2 + 4 R_0 R_1 + R_1^2) \nonumber \\
& - 8 L R_0 R_1 (2 R_0^3 + 7 R_0^2 R_1 + 7 R_0 R_1^2 + 2 R_1^3)+ 
 7 R_0^2 R_1^2 (R_0 + R_1)^2;\\
P(L, R_0, R_1)
=& L^2(R_1-L)^4 (48 R_0^3 (R_1-2L) + 24 L^2(R_1-L)^2 \nonumber \\
  & - 72L R_0(R_1-L)(R_1-2L) + R_0^2 (216L^2 - 216 R_1L + 31 R_1^2));\\
Q(L, R_0, R_1)
=&-L^2 R_{0} R_{1} (32 L^2 + 17 R_{0} R_{1} - 32 L (R_{0} + R_{1}));\\
S(L, R_0, R_1)
=&L(R_{1}-L)^2(40 ( R_1 -L)^2L^2 + 3 R_0^3 (9 R_1-16L) \nonumber \\
&  -  80 R_0 (2L^2 - 3 R_1L + R_1^2)L + 3 R_0^2 (56L^2 - 56 R_1L + 9 R_1^2));\\
T(L, R_0, R_1)=&L^2 R_{0} (R_{1}-L)^2.
\end{align}
\end{thm}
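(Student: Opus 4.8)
The plan is to reduce the asymmetric two-step problem to a normal-form computation that runs in parallel with the symmetric case, and then to extract the twist coefficients by a careful bookkeeping of the Taylor expansions. First I would set up convenient coordinates on the phase space $M$ near the periodic orbit $\cO_2 = \{P, F(P)\}$: using the arc-length parameters $s_0, s_1$ on $\gamma_0, \gamma_1$ together with the (sine of the) reflection angles as conjugate momenta, so that $F$ factors as $F = F_1 \circ F_0$, where $F_0$ maps a state on $\gamma_0$ to the resulting state on $\gamma_1$ and $F_1$ maps back. Each $F_j$ is generated by the Euclidean distance between impact points, so I can write down the implicit generating-function relations and Taylor-expand them to the needed order (through degree $5$ in position-momentum, i.e. enough to see $\tau_2$). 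The radius of curvature data $R_j, R_j'', R_j^{(4)}$ enters precisely because the boundary curves near $\gamma_j(0)$ are determined up to fourth order by these quantities (the odd derivatives vanish by the evenness of $a, b$, which is why only even-order curvature derivatives appear).

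Next I would diagonalize $D_P F^2$. The product formula for $D_P F$ in the symmetric case, \eqref{DF}, generalizes to a product of two such ``one-bounce'' matrices with radii $R_0$ and $R_1$; the trace of $D_P F^2$ is governed by $(\frac{L}{R_0}-1)(\frac{L}{R_1}-1)$, which is exactly the quantity appearing in assumptions (B1), (B2). After a linear symplectic change of variables bringing $D_P F^2$ to the rotation by $\theta$ with $\lambda = e^{i\theta}$, I would pass to the complex variable $z = x + i y$ and compute the coefficients $c_{jk}$ of the $z^j \bar z^k$ terms in $f(z) = \lambda z + \sum c_{jk} z^j \bar z^k$, just as the symmetric-case sections \S\ref{tau1.symp} and \S\ref{sec.taylor} (referenced via $c_{03}$, $d_{03}$, \eqref{c-03}, \eqref{d-jk}) evidently do. The twist coefficients then come from the standard Birkhoff recursion: $\tau_1$ is a universal rational expression in $\lambda$ and the $c_{jk}$ with $j+k \le 3$ (the $|c_{21}|$-type term minus resonant corrections built from $c_{20}, c_{11}, c_{02}$), and $\tau_2$ involves additionally the degree-$4$ and degree-$5$ coefficients together with the already-computed degree-$2,3$ data and the first Birkhoff transformation coefficients $d_{jk}$. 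The role of (B1) is to invert $1 - \lambda^{4}$-type denominators at the $\tau_1$ stage, and (B2) to invert $1-\lambda^{6}$-type denominators at the $\tau_2$ stage; the sign $(-1)^k$ and the factor $1/\sqrt{\Delta}$ arise from the orientation of the diagonalizing change of coordinates, which differs according to whether $L$ is smaller than both $R_j$ or larger than both (the two geometrically distinct elliptic configurations), while $\Delta = L(R_0-L)(R_1-L)(R_0+R_1-L) > 0$ in both cases.

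The main obstacle will be the sheer size of the symbolic computation for $\tau_2$ in the asymmetric setting: unlike the symmetric case there is no reflection symmetry to halve the work or to force $\tau_k(F^2,P) = 2\tau_k(F,P)$, so every intermediate quantity is a genuine two-variable rational function in $R_0, R_1$ (and $L$), and the final answer must be massaged into the stated factored form with the polynomials $N, P, Q, S, T$ and the common denominators $\Delta, \Gamma$. I would organize this by: (i) verifying that setting $R_0 = R_1 = R$ recovers \eqref{tau1.sym.F}, \eqref{tau2.sym.F} up to the factor $2$, which is a strong consistency check; (ii) checking the scaling degrees, namely $\deg(\tau_1) = -1$ and $\deg(\tau_2) = -2$ using $\deg L = 1$, $\deg R_j = 1$, $\deg R_j'' = -1$, $\deg R_j^{(4)} = -3$ (per Definition \ref{def.deg} and the remarks following it), which the claimed formulas must satisfy term by term; and (iii) confirming the $R_0 \leftrightarrow R_1$ symmetry that is manifest in the way $N, Q, T$ are symmetrized and $P, S$ appear with both orderings. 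The remaining labor is then routine but lengthy polynomial algebra, best carried out with a computer algebra system and then transcribed; I would present the derivation of the $c_{jk}$ from the generating functions in detail and state the Birkhoff recursion explicitly, relegating the final simplification to a verification that the two sides agree as rational functions.
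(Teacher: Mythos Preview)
Your approach to $\tau_1$ matches the paper's exactly: compute $D_PF^2$ as a product of two one-bounce matrices, diagonalize via the same rescaling $\eta = (-\tilde a_{01}/\tilde b_{10})^{1/4}$, pass to complex coordinates, and read off $\tau_1 = \Im(c_{21})$. (One small correction: because $a(t)$ and $b(t)$ are even, all second- and fourth-order terms of the Taylor expansion vanish, so there are no $c_{20},c_{11},c_{02}$ corrections to worry about; $\tau_1$ is literally $\Im(c_{21})$.)

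For $\tau_2$, however, your plan diverges from the paper in a significant way. You propose to run the direct Birkhoff recursion through fifth order and then simplify or verify the resulting rational function against the stated formula. The paper explicitly tries this and reports that ``it does not lead to an explicit formula for $\tau_2(F^2,P)$'': the raw output is too unwieldy to massage into the displayed form. Instead, the paper argues structurally that $\tau_2$ must be linear in $R_j^{(4)}$, quadratic in $R_j''$, with poles only along $\Delta=0$ and $\Gamma=0$, and then \emph{posits} the ansatz \eqref{tau2.asym.F} with undetermined homogeneous polynomials $N,P,Q,S,T$. These polynomials are pinned down by two limiting reductions you did not mention---the symmetric case $R_0=R_1$ (giving \eqref{eq.tau2.sym}) and the flat-wall limit $R_1\to\infty$, which is equivalent to a symmetric table with doubled length and gives \eqref{eq: tau_2 R_1 infty}---together with a finite number of numerical evaluations of $\tau_2$ plugged into linear systems for the unknown coefficients (further organized via the $(U,V)=(\tfrac{R_0+R_1}{2},\tfrac{R_0-R_1}{2})$ change of variables and symmetric/antisymmetric splitting). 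Your route would certainly \emph{prove} the theorem once the target formula is given, but the paper's interpolation method is what \emph{produces} the factored form and keeps each linear system small enough to solve reliably.
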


See Remark \ref{largeL} for explanations about the sign appearing in \eqref{tau2.asym.F}.

\begin{remark}
Assuming the triple $(L, R_0, R_1)$ satisfies (B1), 
the zero locus of $\tau_1$ is a line in the $(R_0'', R_1'')$-plane.
Similarly, assuming the triple $(L, R_0, R_1)$ satisfies (B1) and (B2), 
the zero locus of $\tau_2$ is a line in the $(R_0^{(4)}, R_1^{(4)})$-plane.
\end{remark}

We start with an observation that in the case  $R_0''\le 0$ and $R_1''\le 0$, 
the first twist coefficient $\tau_1$ is positive and the elliptic periodic orbit $\cO_2$ is nonlinearly stable.
\begin{example}
Consider the table consisting of two half-ellipses:
the left half given by $\gamma_0: \frac{x^2}{b_0^2}+ y^2=1$, $x\le 0$,
while the right half given by 
$\gamma_1: \frac{x^2}{b_1^2}+ y^2 =1$, $x\ge 0$,
where $0<b_1< b_0 <1$. 
Consider the periodic orbit $\cO_2$ along the $x$-axis.
Then $L=b_0+b_1$, $R_0=\frac{1}{b_0}< R_1=\frac{1}{b_1}$.
So the orbit $\cO_2$ is elliptic when either $b_0+b_1<\frac{1}{b_0}$
or $b_0+b_1>\frac{1}{b_1}$ and satisfies (A1) when 
$(b_0(b_0+b_1)-1)(b_1(b_0+b_1)-1)\neq \frac{1}{2}$. 
Then it follows from \eqref{tau1.asym.F} that $\cO_2$ is nonlinearly stable.
\end{example}

\begin{example}\label{asymL}
Consider the asymmetric lemon billiards introduced in \cite{CMZZ}.
Recall that an asymmetric lemon $Q(r, B, R)$ is the intersection of two round disks
of radii $r<R$, respectively, where $R-r<B<R+r$ measures the distance between 
their centers. The periodic orbit $\cO_2$ of period $2$ satisfies
$(L, R_0, R_1)=(R+r-B, r, R)$. This orbit is elliptic if $B<r$ or $B>R$.
Assuming  $2(B-r)(B-R)\neq rR$, the orbit
$\cO_2$ is nonlinearly stable,
since the first twist coefficient is $\tau_1(F^2, P) = \frac{1}{8}(r^{-1} + R^{-1}) \neq 0$.
This includes the table $Q(r,B,R)$ with $4(B-r)(B-R)= rR$, which is
one of the two cases left out in \cite[Proposition II.2]{CMZZ}.
See also \cite[Section 4.1]{KP05}.
\end{example}

\begin{figure}[htbp]
\begin{minipage}{0.35\linewidth}
\centering
\tikzmath{
\r=3;
\b=2.618*\r;
}
\begin{tikzpicture}
\draw[domain=-0.743553:0.743553, samples=100, thick] plot ({-\r*cos(\x r)}, {-\r*sin(\x r)});
\draw[domain=-0.345273:0.345273, samples=100, thick] plot ({-\b +2*\r*cos(\x r)}, {2*\r*sin(\x r)});
\fill (-\r,0) node[left]{$\gamma_0$} circle[radius=0.05];
\fill ({2*\r -\b}, 0) node[right]{$\gamma_1$} circle[radius=0.05];
\draw[thick] (-\r,0) -- ({2*\r -\b}, 0);
\end{tikzpicture}
\end{minipage}
\quad
\begin{minipage}{0.55\linewidth}
\centering
\includegraphics[height=2in]{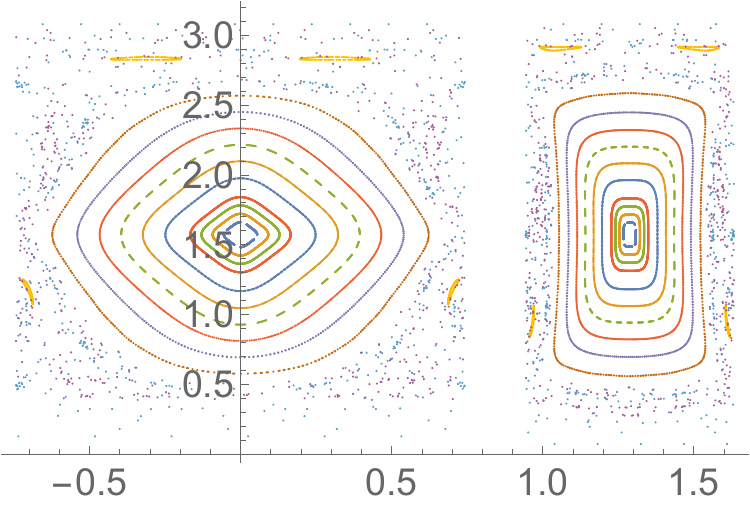}
\end{minipage}
\caption{The asymmetric lemon billiards $Q(r,B_{+}(r,R),R)$ when $R=2r$.
Left: the table; Right: the phase portrait.}\label{phaseBR}
\end{figure}

\begin{remark}
Consider the resonance case $2(B-r)(B-R)= rR$ that is excluded from 
the discussion in Example \ref{asymL}, which corresponds 
to two tables with $B_{\pm}(r,R):=\frac{1}{2}(r+R \pm \sqrt{r^2+R^2})$.
In both cases, we have $c_{03}=\frac{\bar{\lambda}}{8}(\frac{1}{r}-\frac{1}{R}) \neq 0$, which is an obstruction for  the existence of the  first-step Birkhoff transformation for the symmetric billiards.
It follows from Proposition \ref{pro.equi}
that the the asymmetric lemon billiard $Q(r,B_{\pm}(r,R), R)$ is not locally analytically integrable around the periodic orbit $\cO_2$.
Numerical results suggest that the  periodic orbit $\cO_2$ is nonlinearly stable,
see Fig.~\ref{phaseBR}.
\end{remark}

In the next subsection we will consider a new class of billiards, 
the ellipse-hyperbola lens billiards. Being asymmetric billiards, there
is a one-parameter family of such billiards  satisfying $R_0=R_1=:R$. 
For such billiards, the nonresonance assumptions (B1) and (B2) reduce to:
\begin{enumerate}
\item[(B1)$'$] $|\frac{L}{R}-1| \in (0,1)\backslash\{\frac{\sqrt{2}}{2}\}$;

\item[(B2)$'$] $|\frac{L}{R}-1| \in (0,1)\backslash\{\frac{1}{2}, \frac{\sqrt{3}}{2}\}$.
\end{enumerate}
Assuming (B1)$'$, it follows from \eqref{tau1.asym.F} that
the first twist coefficient of the billiard map $F$ along the
orbit $\cO_2=\{P, F(P)\}$ is given by
\begin{align}
\tau_1(F^2, P) &= \frac{1}{8} \Big(\frac{2}{R}
-\frac{L}{2R - L}(R_0'' +R_1'') \Big). \label{tau1.asym.R}
\end{align}
Similarly, assuming (B1)$'$ and (B2)$'$, it follows from \eqref{tau2.asym.F} that
the second twist coefficient of the billiard map $F$ along the
orbit $\cO_2=\{P, F(P)\}$ is given by
\begin{align}
\tau_2
&=
\frac{3(2 L^2 - 8 L R + 7 R^2)}{128R^2(R-L)L^{1/2}(2R-L)^{1/2}}
- \frac{L^{1/2} (10 L^2 - 40 L R + 27 R^2)(R_0''+R_1'')}{384R(R-L)(2R-L)^{3/2}} \nonumber \\
&
+ \frac{\scriptstyle 
L^{3/2}(24 L^4 - 192 L^3 R + 456 L^2 R^2 - 384 L R^3 + 79 R^4)((R_0'')^2+(R_1'')^2)
- 2L^2 R^2 (32 L^2 - 64 L R + 17 R^2)R_0'' R_1''}{1536(R-L)(2R-L)^{5/2}(2(R-L)^2 -R^2)} \nonumber \\
&-\frac{L^{3/2} R(R_0^{(4)}+R_1^{(4)})}{192(2R-L)^{3/2}}. \label{tau2.asym.R}
\end{align}

\subsection{Ellipse-hyperbola lens billiards}
Let $a, b, p, q$ be four positive numbers with $a>p$,  $Q(a,b,p,q)$ be the domain
bounded by the ellipse $\frac{x^2}{a^2} + \frac{y^2}{b^2}=1$
and the right branch of the hyperbola $\frac{x^2}{p^2} - \frac{y^2}{q^2}=1$.
See Fig.~\ref{ell.hyp.table}. 
The domain  $Q(a,b,p,q)$ resembles an ellipse-hyperbola lens, a special type of aspheric lenses.
We start with a special case that $a^2-b^2=p^2+q^2=:c^2$.
In this case, the two components of the boundary $\pa Q(a,b,p,q)$ are confocal conic curves
and the corresponding ellipse-hyperbola lens billiard is completely integrable.  
Consider the periodic orbit $\cO_2$ of period $2$ of the billiards  along the $x$-axis. 
Then the distance is $L=a-p$, 
the radius of the ellipse at $(a,0)$ is $R_0=\frac{b^2}{a}=a-\frac{c^2}{a}$,
the radius of the hyperbola at $(p,0)$ is $R_1=\frac{q^2}{p}=\frac{c^2}{p}-p$. 
As $p$ increases from $0$ to $c$,
$L$ decreases from $a$ to $a-c$, while $R_1$ decreases from $\infty$ to $0$. 
Moreover, $R_1=L=R_0$ when $p=p_c:=\frac{c^2}{a}$,
$R_1 >L >R_0$ when $p<p_c$ and
$R_1 <L <R_0$ when $p>p_c$. It follows that the periodic orbit $\cO_2$ is hyperbolic
when $p\in (0,c)\backslash \{p_c\}$ and is parabolic when $p=p_c$.
Next we will assume $c=1$ and
consider two families of perturbations of the confocal  ellipse-hyperbola lens billiards:
(1) shift the confocal hyperbola with $p=p_c=\frac{1}{a}$; 
(2) deform the confocal hyperbola with $p\neq p_c=\frac{1}{a}$.

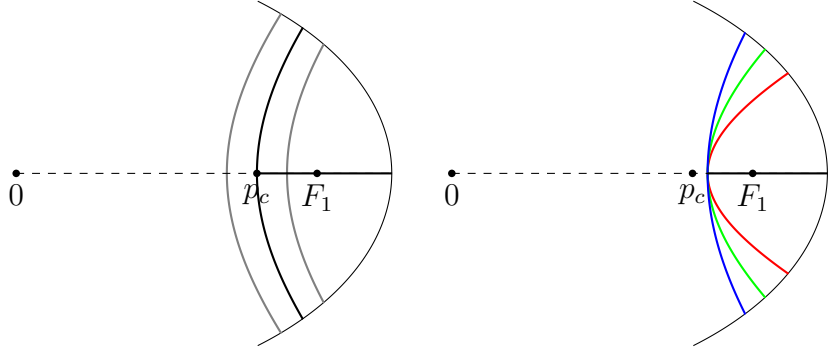
\begin{figure}[htbp]
\tikzmath{
\a=5;
\b=3;
\c=4;
\p=3.2;
\q=3;
\s=0.4;
\m=3.4;
\n=1.56205;
\r=2.33238;
}
\begin{tikzpicture}
\fill (0,0) node[below]{0} circle[radius=0.05];
\fill (\c,0) node[below]{$F_1$} circle[radius=0.05];
\clip[domain=-90:90, samples=360] plot ({\a*cos(\x)}, {\b*sin(\x)});
\draw[thick, domain=-50:50, samples=100] plot ({\a*cos(\x)}, {\b*sin(\x)});
\draw[domain=-0.8:0.8, samples=100, thick] plot ({\p*cosh(\x)}, {\q*sinh(\x)});
\draw[gray, thick, domain=-0.8:0.8, samples=100] plot ({\s+\p*cosh(\x)}, {\q*sinh(\x)});
\draw[gray, thick, domain=-0.8:0.8, samples=100] plot ({-\s+\p*cosh(\x)}, {\q*sinh(\x)});
\draw[dashed] (0,0) -- (\a,0);
\coordinate (pc) at (\p,0);
\fill (pc) circle[radius=0.05]  node[below]{$p_c$};
\draw[thick] (pc) -- (\a,0);
\end{tikzpicture}
\quad
\begin{tikzpicture}
\fill (0,0) node[below]{0} circle[radius=0.05];
\fill (\c,0) node[below]{$F_1$} circle[radius=0.05];
\clip[domain=-90:90, samples=360] plot ({\a*cos(\x)}, {\b*sin(\x)});
\draw[thick, domain=-50:50, samples=100] plot ({\a*cos(\x)}, {\b*sin(\x)});
\draw[red, thick, domain=-0.8:0.8, samples=100] plot ({\m*cosh(\x)}, {\n*sinh(\x)});
\draw[green, thick, domain=-0.8:0.8, samples=100] plot ({\m*cosh(\x)}, {\r*sinh(\x)});
\draw[blue, thick, domain=-0.8:0.8, samples=100] plot ({\m*cosh(\x)}, {(1+\r)*sinh(\x)});
\draw[dashed] (0,0) -- (\a,0);
\draw[thick] (\m,0) -- (\a,0);
\coordinate (pc) at (\p,0);
\fill (pc) circle[radius=0.05]  node[below]{$p_c$};
\end{tikzpicture}
\caption{Some ellipse-hyperbola lenses with fixed ellipse.
(1) the confocal hyperbola with $p=p_c$ (black) and two shifts (gray);
(2) a confocal hyperbola with $p>p_c$ (red) and two deformations
$q=q_2$ (green) and $q>q_2$ (blue).}\label{ell.hyp.table}
\end{figure}

\begin{example}\label{EH.shift}
We start with the confocal ellipse-hyperbola lens $Q(a, b, p, q)$ that $b^2= a^2 -1$,
$p=p_c=\frac{1}{a}$, $q^2= 1- p^2 = 1 - a^{-2}$ and shift the hyperbola horizontally by $s$. 
It follows that  the corresponding periodic orbit $\cO_2$ is elliptic
for all $0<|s|< a- \frac{1}{a}$.
In the following we will assume that
the elliptic periodic orbit $\cO_2$  satisfies  the nonresonance assumption (B1)$'$  that
$|s|\in (0, a- \frac{1}{a})\backslash \{\frac{1}{\sqrt{2}(a- \frac{1}{a})}\}$.
Plugging $L=a-\frac{1}{a} -s$, 
$R_0=R_1=a- \frac{1}{a}$,
$R_0''=\frac{3a}{b^2}-\frac{3}{a}=\frac{3}{a (a^2-1)}$ 
and $R_1''=\frac{3p}{q^2}+\frac{3}{p}=\frac{3a^3}{a^2- 1}$ into  \eqref{tau1.asym.R}, 
we have
\begin{align}
\tau_1(F^2, P)
&=\frac{a}{4 (a^2 - 1)}
-\frac{3 (a^4 +1) (a^2 - a s -1)}{ 8 a (a^2 -1) (a^2 + a s - 1)}. \label{EH.conf.tau1}
\end{align}
Note that the only zero of the twist coefficient $\tau_1$ is when 
$s=\frac{(a^2-1)(3 a^4 - 2 a^2 + 3)}{a (3 a^4 + 2 a^2 + 3)}$. 
Set $\rho_1(a):=\frac{3 a^4 - 2 a^2 + 3}{ 3 a^4 + 2 a^2 + 3} \in (\frac{1}{2},1)$.
It follows that $\tau_1 \neq 0$ and the periodic orbit $\cO_2$ is nonlinearly stable 
as long as $|s|\in (0, a- \frac{1}{a})\backslash \{\frac{1}{\sqrt{2}(a- \frac{1}{a})}\}$
and  $s\neq \rho_1(a) (a-\frac{1}{a})$. 
\end{example}

\begin{remark}
Consider the resonance case  (B1)$'$   
that $s=s_{\pm}:= \pm \frac{1}{\sqrt{2}}(a-\frac{1}{a})$.
We have  
\begin{align}
c_{03}(s_{+})=-\frac{\bar{\lambda}}{8}\frac{\sqrt{4-2 \sqrt{2}} \left(a^2+1\right)}{\left(\sqrt{2}+2\right)^{3/2} a} \neq 0, \quad
c_{03}(s_{-})=\frac{\bar{\lambda}}{8}\frac{\sqrt{4 \sqrt{2}+6} \left(a^2+1\right)}{\left(2-\sqrt{2} \right) a} \neq 0.
\end{align} 
Therefore,  there is no  first-step Birkhoff transformation for the billiard map at the periodic orbit $\cO_2$ at the resonance $s=s_{\pm}$. 
It follows from Proposition~\ref{pro.equi}
that the billiard map is not locally analytically integrable around the periodic orbit $\cO_2$.
\end{remark}

\noindent\textbf{Example \ref{EH.shift} (continued).}
We consider the case that $\tau_1=0$. This happens exactly when 
$s=\rho_1(a) (a- \frac{1}{a})$, or equally,
$L=a- \frac{1}{a}- s =(a- \frac{1}{a})\frac{4a^2}{ 3 a^4 + 2 a^2 + 3}$.
The nonresonance assumptions (B1)$'$ and (B2)$'$ 
are satisfied when $\rho_1(a) \notin \{\frac{1}{\sqrt{2}},\frac{\sqrt{3}}{2}\}$. 
Plugging $R_0^{(4)}=-\frac{3\left(4a^2 -3\right)}{a(a^2 -1)^3}$ and 
$R_1^{(4)}=-\frac{3 (p^2 - 3 q^2)}{p q^6}=-\frac{3a^5 (4 - 3a^2)}{(a^2 -1)^3}$
into \eqref{tau2.asym.R}, 
we have
\begin{align}
\tau_2
&=\frac{\scriptstyle a (729 a^{20}-2214 a^{18}-315 a^{16}-2568 a^{14}-2206 a^{12}-1188 a^{10}-2206 a^8-2568 a^6-315 a^4-2214 a^2+729)}{96 \sqrt{6} (a^2-1)^2 (a^4+1)^{3/2} (3 a^4-2 a^2+3) (9 a^8-36 a^6+22 a^4-36 a^2+9)}.
\end{align}
Note that $\tau_2$ has exact one zero at $a=a_{\ast}\approx 1.87861$.
It follows that if $a\neq a_{\ast}$ and  
$\rho_1(a) \notin \{\frac{1}{\sqrt{2}},\frac{\sqrt{3}}{2}\}$, 
then the orbit $\cO_2$ is nonlinearly stable.

\begin{example}
Consider the second family of the hyperbola-ellipse lenses by changing the parameter $q$
while fixing some $p\neq p_c=\frac{1}{a}$. We divide our discussion into two subcases
according to whether $p>\frac{1}{a}$ or $p< \frac{1}{a}$. 

(1) $p\in (\frac{1}{a}, 1)$: let $q$ increase from 
the confocal parameter $q_1=\sqrt{1 - p^2}$.
Note that both $L=a-p$ and $R_0= a- \frac{1}{a}$ are fixed in this process, and $L< R_0$.
Let $q_2:=\sqrt{p (a-p)}$ be the parameter value such that the radius 
$R_1=\frac{q^2}{p}=L$.
Then the periodic orbit $\cO_2$ remains hyperbolic for $q\in (q_1, q_2)$,
becomes parabolic at $q=q_2$ and then stays elliptic for $q> q_2$.
Next we assume $q>q_2$. 
Since $R_0=a- \frac{1}{a}<  2L=2(a-p)$, we have 
\begin{align*}
(1-\frac{L}{R_0})(1-\frac{L}{R_1}) < 1-\frac{L}{R_0} < \frac{1}{2}.
\end{align*}
Therefore  the resonance $\lambda^4=1$ does not occur in this family.
It follows from \eqref{tau1.asym.F} that
the first twist coefficient of the orbit $\cO_2$ is given by
\begin{align}
\tau_1(F^2, P)
=\frac{1}{8} \Big(\frac{a^2 p+a q^2 - p}{q^2 (a^2- 1)}
+\frac{ap(a-p) \Big(\frac{3(p^2+q^2)(a p-1)}{a q^2 (a p-p^2-q^2)}
+\frac{3 (a p-p^2-q^2)}{p (a^2-1)(a p -1)}\Big)}{a (p^2+ q^2)-  p}\Big). \label{tau1.ell.hyp}
\end{align}
Note that for $q$ sufficiently large, 
$\tau_1(F^2, P) \approx\frac{1}{8}(\frac{a}{(a^2- 1)}-\frac{3(a-p)}{ (a^2-1) (a p -1)})$, 
which vanishes at $p=\frac{4a}{a^2 +3} \in (\frac{1}{a}, 1)$.
It follows that there are cases for which $\tau_1(F^2, P)$ vanishes.

(2) $p\in (0, \frac{1}{a})$: let $q$ decrease from 
the confocal parameter $q_1=\sqrt{1 - p^2}$.
Note that $L=a-p$ and $R_0=a- \frac{1}{a}$ are fixed, and $L> R_0$.
Let $q_2:=\sqrt{p(a-p)}$ be the parameter value such that the radius 
$R_1=\frac{q^2}{p}=L$,
and $q_3:=\sqrt{p(\frac{1}{a} -p)}$ be the parameter value such that the radius 
$R_1=\frac{q^2}{p}=L-R_0$.
Then the periodic orbit $\cO_2$ is hyperbolic for $q\in (0,q_3)\cup (q_2, q_1)$,
is parabolic at $q \in \{q_2, q_3\}$ and is elliptic for $q\in (q_3, q_2)$.
The nonresonance assumption (B1)
$(\frac{L}{R_0}-1)(\frac{L}{R_1}-1) \neq \frac{1}{2}$ holds 
exactly when $R_1(q) \neq\frac{2L(L- R_0)}{2L- R_0}$. 
Note that such a value always lies in the interval $(R_1(q_3), R_1(q_2))=(L-R_0, L)$, 
and hence the resonance $\lambda^4=1$ does  occur.
The corresponding value for $q$ at resonance is 
$q^2=\frac{2p(a-p)(1-a p)}{a^2-2 a p+1}$. 
If $q^2 \neq \frac{2p(a-p)(1-a p)}{a^2-2 a p+1}$,
then the first twist coefficient of the orbit $\cO_2$ is given by \eqref{tau1.ell.hyp}.
Note that $\tau_{1}(F^2, P) \to -\infty$ at both limits $q\searrow q_3$ and $q\nearrow q_2$.
\end{example}

\begin{remark}
We consider the resonance case that $q^2=\frac{2p(a-p)(1-a p)}{a^2-2 a p+1}$ for
$p\in (0, \frac{1}{a})$, and obtain that
\begin{align*}
c_{03}=-\frac{\bar{\lambda}}{8}
\frac{ a (-a^2 -1 + 2 a^3 p + 2 a p - 3 a^2 p^2 + p^2) ( (a - p)^2 + (1- a p)^2)}{p (a^2-1)^2 (a-p) (1-a p)^2}.
\end{align*}
Then $c_{03}=0$ if and only if 
$p=p_{\ast}(a):=\frac{a^3+a  -(a^2-1)\sqrt{a^2+1}}{3 a^2-1} \in (0, \frac{1}{a})$.
Therefore, assuming the periodic orbit $\cO_2$ is at resonance with $\lambda^4=1$, 
or equally, 
$q^2=\frac{2p(a-p)(1-a p)}{a^2-2 a p+1}$, we have
\begin{enumerate}
\item if $p \neq p_{\ast}(a)$, then there is no  first-step Birkhoff transformation, and the billiard map is not locally analytically integrable around the periodic orbit  $\cO_2$;

\item if $p=p_{\ast}(a)$, 
then $c_{03}=0$ and there exists a first-step Birkhoff transformation for the billiard map at the periodic orbit $\cO_2$. Moreover,  
\begin{align}
\tau_1=
-\frac{8 a + 54 a^3 + 50 a^5 + 2 a^7 - 2 a^9 +(1+ 27 a^2+ 53 a^4 -  3 a^6+ 2 a^8)\sqrt{a^2+1}}
{4 (a^2-1) (\sqrt{a^2+1}+2 a) (\sqrt{a^2+1} a -a^2+1)^2 (a^3+a -\sqrt{a^2+1}(a^2-1))}.
\end{align}
One can check that $\tau_1<0$ and hence  the periodic orbit $\cO_2$ is nonlinearly stable.
\end{enumerate}
\end{remark}

The rest of the paper is organized as follows.
In \S \ref{sec.taylor} we provide some preliminary results on the billiard map
and derive the Taylor expansion of the billiard map $F$ at the point $P$ for the symmetric billiards. 
In \S \ref{sec.tau1} we obtain
the Birkhoff normal form with the first twist coefficient $\tau_1(F,P)$ for the symmetric billiards.
The explicit formula about $\tau_1(F,P)$ is not new. What is new is the detailed justification
about the Birkhoff transformation and Birkhoff normal form.
In \S \ref{sec.tau2} we obtain an explicit formula of the second twist coefficient $\tau_2(F,P)$
in terms of the geometric quantities of the billiard table for the symmetric billiards. 
A comparison between our approximation of rotation numbers
with Ko{\l}odziej's formula is given in \S \ref{sec.Kol}.
In \S \ref{asym.table} we consider the asymmetric billiards and obtain an explicit formula
for the second twist coefficient $\tau_2(F^2,P)$ utilizing the dependence of $\tau_2(F^2,P)$
on the higher order derivatives of the radii of curvature of the boundary of the table. 
There are several occasions that we need to solve a system of linear equations with a large
number of unknowns. Direct approaches do not work well with badly conditioned coefficient matrices since a small change of the input could lead to a large change of the outcome. 
Instead, we manipulate the functions and  divide the unknowns into several groups of unknowns of much smaller sizes. Finally, we  check the formula for $\tau_2(F^2,P)$ is indeed 
correct.

\section{Preliminaries}
\label{sec.taylor}

Let $a(t)=\sum_{n\ge 1} a_{2n}t^{2n}$ and $b(t)=\sum_{n\ge 1} b_{2n}t^{2n}$, 
$t\in(-\eps, \eps)$, be two locally defined even functions, 
$Q(L, a, b)$ be the domain bounded by 
the curves $\gamma_0(t)=(a(t), t)$ and $\gamma_1(t)=(L-b(t), -t)$, $t\in(-\eps, \eps)$,
with two horizontal segments connecting the corresponding endpoints.
See Fig.~\ref{btable}. We will call $Q(L, a, b)$ a billiard table.
Let $M$ be the space of unit vectors $v\in T_{\pa Q(L, a, b)}\bR^2$
pointing inside of the table, which is called the phase space of the billiards.
Let $F$ be the billiard map sending a unit vector to the new unit vector after reflection
on the boundary $\pa Q(L,a, b)$. See \cite{ChMa} for more details.

Instead of using a global parameter for the whole boundary $\pa Q(L,a ,b)$,
we choose to use one local parameter for each of the two curves $\gamma_j$, $j=0, 1$.
That is, let $s$ be the arc-length parameter of $\gamma_j$ with $s(\gamma_j(0))=0$, $j=0, 1$.
Let $\theta$ be the angle from the positive tangent vector  $\dot\gamma_j(s)$  (counterclockwise orientation)
to the outgoing billiard trajectory starting at $\gamma_j(s)$, and $u=-\cos\theta$. 
Then a point in the phase space $M$ has a well-defined coordinate $(s,u)$.
For example, the points $P$ and $F(P)$ of the periodic orbit $\cO_2$
are given by the coordinates $P=(0, 0)$ and $F(P)=(0, 0)$, respectively.

For every small positive number $\delta$,
the open set $U=(-\delta, \delta) \times (-\delta, \delta)$ 
in the coordinate system $(\bR^2,(s, u))$ determines  an open neighborhood $U_{P}$ of $P$
in the phase space of the billiards.
Note that $U=(-\delta, \delta) \times (-\delta, \delta)$
also determines an open neighborhood $U_{F(P)}$ of $F(P)$ in the phase space of the billiards.
Generally speaking, the maps $F: U_{P} \to U_{F(P)}$ and $F: U_{F(P)} \to U_{P}$
are two different functions on the same open set $U$.

For the remaining of this section and both \S\ref{sec.tau1} and \S\ref{sec.tau2}, 
we will consider the case that $b(t)=a(t)$ and leave the general case to \S\ref{asym.table}.
Let $\text{Rot}_{\pi}$ be the rotation of the billiard table $Q(L,a,a)$ around its middle point $(\frac{L}{2}, 0)$ by $\pi$. 
Note that the table is invariant under the rotation  $\text{Rot}_{\pi}$, and the rotation $\text{Rot}_{\pi}$ commutes with the billiard map $F$.  

\begin{remark}\label{rem.identify}
The rotation $\text{Rot}_{\pi}$ identifies the phase space on $\gamma_1$ with the phase space on $\gamma_0$.  It follows that the composition $F^2$ is equal to  $(\text{Rot}_{\pi} \circ F)^2$ on a small neighborhood $U$ of the point $P=(0, 0)$.
Moreover, the map $\text{Rot}_{\pi} \circ F$ can be viewed as a diffeomorphism from the neighborhood $U$
onto its image. In the following we use (abuse) the notation $F$ for $\text{Rot}_{\pi} \circ F$ after the identification of the phase spaces on $\gamma_0$ and $\gamma_1$ via the rotation $\text{Rot}_{\pi}$.
It follows from the construction of Birkhoff Normal Form, or more precisely, from  Eq.~\eqref{formal} and \eqref{BNF}, that the twist coefficients $\tau_k(F^2, P)$, $k\ge 1$,
of the iterated billiard map $F^2$ at $P\in U_{P}$
are just twice of the corresponding twist coefficients $\tau_k(F, P)$, 
$k\ge 1$ of the one-step function $F$ at $P=(0,0)\in U$. 
\end{remark}

\subsection{Taylor polynomial of the function at the point $P$}
Let $R(s)$ be the radius of curvature of the curve $\gamma_0(s)$. 
Note that $R(s)$ is an even function of $s$ since the function $a(t)$ is an even function.
Given a point $(s,u)\in U$, let $(s_1, u_1)=F(s,u)$ be the image under $F$,
$\ell(s,s_1)=|\gamma_1(s_1)-\gamma_0(s)|$ be the length of the trajectory between the two impact points,
and $\ell(s,u)=\ell(s,s_1(s,u))$ be the corresponding function that depends on the initial point $(s,u)$.
Then the first-order partial derivatives of $s_1$ and $u_1$ with respect to $s$ and $u$  are:
\begin{align}
\frac{\pa s_1}{\pa s}&= \frac{\ell(s,u)}{R(s)\sqrt{1-u_1^2}} -\frac{\sqrt{1-u^2}}{\sqrt{1-u_1^2}};\\
\frac{\pa s_1}{\pa u}&= \frac{\ell(s,u)}{\sqrt{1-u^2}\sqrt{1-u_1^2}};\\
\frac{\pa u_1}{\pa s}&= \frac{\ell(s,u)}{R(s)R(s_1)} -\frac{\sqrt{1-u_1^2}}{R(s)} - \frac{\sqrt{1-u^2}}{R(s_1)} ;\\
\frac{\pa u_1}{\pa u}&= \frac{\ell(s,u)}{R(s_1)\sqrt{1-u^2}} -\frac{\sqrt{1-u_1^2}}{\sqrt{1-u^2}};
\end{align}
See \cite[Section 2.11]{ChMa} for more details.
See also \cite[Eq.~(1-1)]{KP05}. The length function $\ell(s,s_1)$ is a generating function of the billiard map, see \cite[Chapter 3]{Tab05}. More precisely, it satisfies
\begin{align*}
d\ell=-\cos\theta\ ds + \cos\theta_1\ ds_1 = u\ ds - u_1\ ds_1.
\end{align*}
Since $d(d\ell)=0$, it follows that $du_1\wedge ds_1= du\wedge ds$. That is, the billiard map 
$F$ preserves the $2$-form $du\wedge ds$ and is a symplectic embedding on a neighborhood $U$ of point $P=(0,0)$.
Applying the chain rule to the function $\ell(s,u)=\ell(s,s_1(s,u))$, we have
\begin{align}
\frac{\pa \ell}{\pa s}&= u - u_1\frac{\pa s_1}{\pa s}=u -u_1\Big(\frac{\ell(s,u)}{R(s)\sqrt{1-u_1^2}} -\frac{\sqrt{1-u^2}}{\sqrt{1-u_1^2}}\Big);\\
\frac{\pa \ell}{\pa u}&= -u_1\frac{\pa s_1}{\pa u}= -u_1\frac{\ell(s,u)}{\sqrt{1-u^2}\sqrt{1-u_1^2}}.
\end{align}

It follows from the expressions of the first-order partial derivatives that the higher-order partial derivatives 
$\frac{\pa^{j+k}s_1}{\pa s^j \pa u^k}$ 
and $\frac{\pa^{j+k}u_1}{\pa s^j \pa u^k}$ can be expressed recursively  using $s$, $u$,
$s_1$, $u_1$, $\ell(s,u)$, $R^{(n)}(s)$, $0\le n \le j+k-1$.
In particular, evaluating at $P=(0,0)$, we have, $(s_1, u_1)=(0,0)$, 
$\ell(0,0)=L$, $R^{(k)}(0)=R^{(k)}$, and the higher-order derivatives
$\frac{\pa^{j+k}s_1}{\pa s^j \pa u^k}(0,0)$ 
and $\frac{\pa^{j+k}u_1}{\pa s^j \pa u^k}(0,0)$
are rational functions of  $R$
and are polynomials of $L$ and $R^{(n)}$, $1\le n \le j+k -1$.
Since $R(s)$ is an even function,
a direct calculation shows that all the 2nd- and 4th-order derivatives of $s_1$ and $u_1$
with respect to $s$ and $u$ vanish at $(s,u)=(0,0)$,
the 3rd-order derivatives of $s_1$ and $u_1$ are linear with respect to $R''$,
and the 5rd-order derivatives of $s_1$ and $u_1$ are linear with respect to $R^{(4)}$ 
and are quadratic with respect to $R''$. Here we only list the 3rd-order partial derivatives\footnote{
The Mathematica codes for the calculations used in this article can be found in \cite{JZ21}.}:
\begin{align}
\frac{\pa^{3}s_1}{\pa s^3}(0,0)
&=\frac{L^3 - 6 L^2 R + 11L R^2 - 6R^3}{R^5} -\frac{L}{R^2}R'';
\\
\frac{\pa^{3}s_1}{\pa s^2 \pa u}(0,0)
&=\frac{L^3 - 5 L^2 R + 7 L R^2 - 2 R^3}{R^4};
\\
\frac{\pa^{3}s_1}{\pa s \pa u^2}(0,0)& =\frac{L (L - 2 R)^2}{R^3};
\\
\frac{\pa^{3}s_1}{ \pa u^3}(0,0) &=\frac{L (L^2 - 3 L R + 3 R^2)}{R^2};
\end{align}
\begin{align}
\frac{\pa^{3}u_1}{\pa s^3}(0,0) 
&=\frac{2 R - L }{R^4} + \frac{-L^3 + 3 L^2 R - 4L R^2 + 2 R^3}{R^5}R'';
\\
\frac{\pa^{3}u_1}{\pa s^2 \pa u}(0,0) 
&=\frac{2 R - L  }{R^3} +\frac{ - L^3 + 2 L^2 R - L R^2}{R^4}R''; 
\\
\frac{\pa^{3}u_1}{\pa s \pa u^2}(0,0) &=\frac{2 R-L}{R^2} + \frac{ - L^3+ L^2 R}{R^3}R'';
\\
\frac{\pa^{3}u_1}{ \pa u^3}(0,0) &=-\frac{L^3}{R^2}R''.
\end{align}
The 5th-order Taylor polynomial of the function $F(s,u)=(s_1, u_1)$ 
around $P=(0, 0)$ can be written as
\begin{align}
s_1 & = \ta_{10}s + \ta_{01} u 
+\sum_{j+k=3} \ta_{jk}s^j u^k+ \sum_{j+k=5} \ta_{jk}s^j u^k+h.o.t., \label{s1su}
\\
u_1 & = \tb_{10}s + \tb_{01} u 
+\sum_{j+k=3} \tb_{jk}s^j u^k+ \sum_{j+k=5} \tb_{jk}s^j u^k+h.o.t., \label{u1su}
\end{align}
where $\ta_{jk}=\frac{1}{j!k!}\frac{\pa^{j+k}s_1}{\pa s^j \pa u^k}(0,0)$
 $\tb_{jk}=\frac{1}{j!k!}\frac{\pa^{j+k} u_1}{\pa s^j \pa u^k}(0,0)$,
and h.o.t. stands for higher order terms.
Note that $\ta_{10}=\tb_{01}$, $\deg(\ta_{jk})=1-j$ and $\deg(\tb_{jk})=-j$, 
and all the terms in \eqref{s1su} and \eqref{u1su} are homogeneous
with respect to scaling.

\section{Birkhoff Normal Form with First Twist Coefficient}
\label{sec.tau1}

In this section we will find a symplectic coordinate transformation under which the billiard map
$F$ is of Birkhoff Normal Form of order $3$ at the point $P$.
The main reference for this section is the classic book \cite{SiMo}, see also \cite{Moe90}.
We start with a linear coordinate transformation under which $D_{P}F$ acts as a rigid rotation.
Given the tangent matrix $D_{P}F$ in \eqref{DF},
let $\eta=(-\ta_{01}/\tb_{10})^{1/4}$ be a positive number, and 
$v=\begin{bmatrix} i\eta \\ \eta^{-1} \end{bmatrix}$ be a complex vector. 
Then $v$ is an eigenvector of the matrix $D_{P}F$ corresponding to 
the eigenvalue $\lambda = \ta_{10} - i \sqrt{- \ta_{01} \tb_{10}}$.
Let  $\theta\in (0,2\pi)$ be an argument of the eigenvalue $\lambda$.
Then we introduce a new coordinate system 
$(x,y)=(s/\eta, \eta u)$ and set $(x_1,y_1)=(s_1/\eta, \eta u_1)=F(x,y)$.
It follows from  \eqref{s1su} and \eqref{u1su} that
\begin{align}
\begin{bmatrix} x_1 \\ y_1 \end{bmatrix}
=R_{\theta}\begin{bmatrix} x \\ y \end{bmatrix}
+
\begin{bmatrix} \sum_{j+k=3} a_{jk} x^j y^k+ \sum_{j+k=5} a_{jk} x^j y^k \\ 
\sum_{j+k=3} b_{jk} x^j y^k+ \sum_{j+k=5} b_{jk} x^j y^k\end{bmatrix}+h.o.t. ,
\end{align}
where $R_{\theta}=\begin{bmatrix}\cos\theta & -\sin\theta \\ \sin\theta & \cos\theta \end{bmatrix}$
is a rotation matrix, 
and the coefficients $a_{jk}$ and $b_{jk}$  are given by
\begin{align} 
a_{jk}=\eta^{j-k-1}\ta_{jk}, 
\quad
b_{jk}=\eta^{j-k+1}\tb_{jk},
\end{align}
where $j+k =1, 3, 5$, respectively. For convenience, we write down the linear terms:
\begin{align}
a_{10}= b_{01}=\frac{1}{R}(L -R), \quad 
a_{01} = - b_{10} = \frac{1}{R}\sqrt{L(2R-L)}.
\end{align}
Note that $\deg(\eta)=1/2$. So $\deg(x)=\deg(y)=1/2$,
and $\deg(a_{jk})=\deg(b_{jk})=\frac{1-j-k}{2}$.
In particular, $\deg(a_{jk})=\deg(b_{jk})=0, -1, -2$ when $j+k=1, 3, 5$, respectively.
Observe that $\lambda = a_{10} - i \sqrt{- a_{01} b_{10}} = a_{10} - i a_{01}$ also holds.

\subsection{Introduce complex coordinates}\label{tau1.complex}
One can embed $\bR^2\subset \bC^2$
and view the one-step function $F: (x,y)\mapsto (x_1, y_1)$ 
as a complex symplectic function on a small neighborhood 
of $(0,0)\in\bC^2$ with the same real coefficients. 
Note that this is only a formal extension,
and the true dynamics happens in the 2D real subspace  $\bR^2\subset \bC^2$.
To compute the twist coefficients, 
we consider the following complex coordinate system $(z,w)$ on $\bC^2$:
\begin{align}
z=x +i y; \quad w= x-iy.
\end{align}
It follows from this definition that the real subspace $\bR^2=\{(x,y)\in \bC^2: x,y \in \bR\}$
is transformed to $\{(z,w)\in \bC^2: w=\bar z\}$, which can be also be viewed as a real subspace.
Let $(z_1, w_1)=F(z,w)=(x_1 + iy_1, x_1 -iy_1)$. 
Here we use $F$ (by some abuse of notation) for the new function representing
the transformation in the  $(z,w)$-coordinate. 
We also use this convention for later coordinate transformations.
Then  $w_1=\bar z_1$ if and only if $w=\bar z$ (since both are real 2D subspaces
and the map $F$ is a local diffeomorphism). We will use this characterization 
repeatedly in the remaining of this paper.

Note that $z$ and $w$ span the eigenspaces of the tangent matrix $D_P F$, and the Taylor polynomial of the function $(z_1, w_1)=F(z,w)$ has a diagonalized
linear term and vanishing second- and fourth-order terms:
\begin{align}
z_1&=\lambda(z +\sum_{j+k=3} c_{jk}z^j w^k +\sum_{j+k=5} c_{jk}z^j w^k)+h.o.t., \label{z1zw}\\
w_1&=\bar\lambda(w +\sum_{j+k=3} \bar c_{jk}w^j z^k
 +\sum_{j+k=5} \bar c_{jk}w^j z^k)+h.o.t., \label{w1zw}
\end{align}
where
\begin{align}
c_{30}&=2^{-3}\bar\lambda(a_{30}+ib_{30} - ia_{21}+ b_{21}
 -a_{12} -ib_{12} + ia_{03}-b_{03}),\\
c_{21}&=2^{-3}\bar\lambda(3a_{30}+3ib_{30} -ia_{21}+ b_{21}
+a_{12} +ib_{12} - 3ia_{03} +3b_{03}), \label{c-21}\\
c_{12}&=2^{-3}\bar\lambda(3a_{30}+3ib_{30}+ia_{21}- b_{21}
+a_{12} +ib_{12} + 3ia_{03} -3b_{03}),\\
c_{03}&=2^{-3}\bar\lambda(a_{30}+ib_{30} + ia_{21}- b_{21} 
-a_{12} -ib_{12} - ia_{03} +b_{03}). \label{c-03}
\end{align}
Note that $\deg(c_{jk}) =-1$ for all $j+k=3$.
The fifth-order terms $c_{jk}$, $j+k=5$, are given in a similar way.
Moreover, we have $\frac{\pa (z_1, w_1)}{\pa(z, w)}=1$  since $F$ is symplectic
and the transformation $(x,y) \to (z,w)$ has constant Jacobian.
Plugging \eqref{z1zw} and \eqref{w1zw} into the equation
$\frac{\pa (z_1, w_1)}{\pa(z, w)}=1$ and comparing
the coefficients of the second-order terms, we have
\begin{align}
c_{12} =-3\bar c_{30},
\quad
\bar c_{21} = -c_{21}. \label{jacobi3}
\end{align}
In particular, $c_{21}$ is purely imaginary, say $c_{21}=i\tau_1$ 
for some real number $\tau_1$.
As we will see later, $\tau_1$ is the first twist coefficient of the function $F$ at the point $P$.
We can simplify and get an explicit formula for the quantity $\tau_1$ in terms of 
the geometric characteristics of the billiard table:
\begin{align}
\tau_1(F,P)= \frac{1}{8R} - \frac{L}{8(2R-L)}R''. 
\end{align}
This completes the derivation of the formula \eqref{tau1.sym.F}.
It also confirms that the quantity $\tau_1$ is homogeneous and $\deg(\tau_1)=-1$.

For later convenience, we write down an explicit formula for the term $c_{03}$ in the resonance
case $\lambda^4=1$, or equally, $L=R$. In this case, $\lambda = -i$, and 
\begin{align}
c_{03}=\frac{i}{8}\cdot (-\frac{3+ R R''}{3 R})
=-\frac{i}{24 R}(3+ R R''). \label{c03LR}
\end{align}
It follows that $c_{03}=0$ if and only if $R''= - \frac{3}{R}$.

\subsection{An intermediate transformation}\label{tau1.intermediate}
In this subsection we make use of the assumption (A1) that $\lambda^4 \neq 1$, or equally,
$\frac{L}{R}\in (0, 2)\backslash \{1\}$.
We want to find a transformation $(z,w)\to (z', w')$ under which most of the third-order terms
$c_{jk}$, $j+k=3$, vanish.
To this end, we consider an intermediate coordinate transformation 
with undetermined complex coefficients $d_{jk}$, $j+k=3$:
\begin{align}
z'&=z+p_3(z,w) = z +\sum_{j+k=3} d_{jk}z^j w^k, \label{tran.inter.z}\\
w'&=w+\bar p_3(w,z)= w +\sum_{j+k=3} \bar d_{jk}w^j z^k. \label{tran.inter.w}
\end{align}
This transformation preserves the real subspace $\{(z,w)\in \bC^2: \bar z=w\}$,
but is not necessarily symplectic.
The function $F(z', w')=(z_1', w_1')$ satisfies $\overline{z_1'}=w_1'$, and 
\begin{align}
z_1' &=z_1 +\sum_{j+k=3} d_{jk}z_1^j w_1^k \nonumber \\
&=\lambda(z +\sum_{j+k=3} c_{jk}z^j w^k) 
+ \sum_{j+k=3} d_{jk}\lambda^{j-k} z^j w^k + h.o.t.  \nonumber \\
&=\lambda \big(z'+\sum_{j+k=3} \big(-d_{jk} +c_{jk}+ d_{jk}\lambda^{j-k-1}\big)(z')^j (w')^k \big)
+h.o.t. \label{norm1}
\end{align}
Note that the coefficient of $(z')^2 w'$ in the above equation 
does not change regardless of the choice of $d_{21}$. 
Without loss of generality, we set $d_{21}=0$.
Equating $-d_{jk} +c_{jk}+d_{jk}\lambda^{j-k-1}=0$ 
for each $(j,k) \in \{(3,0), (1,2), (0, 3)\}$, we have
\begin{align}
d_{30}=\frac{c_{30}}{1-\lambda^2}, \quad
d_{12}=\frac{c_{12}}{1- \overline{\lambda}^2}=-3\bar d_{30}, \quad
d_{03}=\frac{c_{03}}{1- \overline{\lambda}^4}, \label{d-jk}
\end{align}
since $c_{12}=-3\bar c_{30}$,  see \eqref{jacobi3}. Note that $\deg(d_{jk})=-1$.

\begin{remark} \label{loc.ana.int}
When  $c_{03}\neq 0$,  the assumption (A1) $\lambda^4 \neq 1$ is needed 
when defining $d_{03}$ in \eqref{d-jk}. 
If $c_{03}=0$, then the transformation $(z,w) \to (z', w')$ can be defined
even in the resonance case  $\lambda^4= 1$. 
This happens in the elliptic billiards case, see Remark \ref{ellipse.resonance}.
On the other hand, it follows from Proposition \ref{pro.equi} that, 
in the  resonance case  $\lambda^4= 1$, 
$c_{03}\neq 0$ implies that the billiard map $F$ is not  locally analytically integrable.
This happens in the lemon billiards case, see Remark \ref{re.lemon} and the discussion
after Proposition \ref{pro.equi}.
\end{remark}

Applying \eqref{d-jk} to \eqref{norm1}, 
we get $z_1'=\lambda(z' + c_{21}(z')^2 w') + h.o.t.$.
Using the properties that $c_{21} = i\tau_1$ is purely imaginary,
$\lambda=e^{i\theta}$ and restricting to the subspace $\{(z,w)\in \bC^2: \bar z=w\}$,
we have
\begin{align}
z_1'=e^{i\theta}(z' +  i\tau_1|z'|^2 z') + h.o.t.
=e^{i(\theta+ \tau_1|z'|^2)}z'+ h.o.t. \label{realtran}
\end{align}
This explains the geometric meaning of the term $\tau_1$: it measures
the amount of twisting of rotations around the elliptic point $P$.

\begin{remark}
The transformation $(z, w) \to (z', w')$ induces a coordinate transformation
$(x,y) \to (x', y')$ of the real variables $(x,y)$. 
However, this real transformation is not necessarily symplectic
and hence the function $F$ is not necessarily symplectic in the coordinate system $(x', y')$. 
\end{remark}

\subsection{A symplectic coordinate transformation}\label{tau1.symp}
To find the corresponding symplectic coordinate transformation in the $(x,y)$-plane,
we will use the generating function construction. 
First, we note that $\frac{\pa }{\pa z}p_3(z, w) = -\frac{\pa}{\pa w} \bar p_3(w,z)$.
So the polynomial 
\begin{align}
s_4(z, w) = \frac{\bar c_{03}}{\lambda^4 -1} \frac{z^4}{4}
-\frac{c_{03}}{\overline{\lambda}^4 -1} \frac{w^4}{4}
- \frac{ c_{30}}{\lambda^2 -1}z^3w + \frac{\bar c_{30}}{\overline{\lambda}^2 -1}zw^3
\end{align}
satisfies that $\frac{\pa s_4}{\pa w}=p_3(z, w)$ and $\frac{\pa s_4}{\pa z}=-\bar p_3(w, z)$.
Note that the complex polynomial $s_4$ is purely imaginary on the subspace
$\{(z,w)\in\bC^2: \bar z=w\}$.
Therefore, the function $g_4(x,y):=\frac{i}{2}s_4(x+iy, x-iy)$ 
is a real homogeneous polynomial of 
the real variables $x$ and $y$.
Let $G_4(x, Y)=xY + g_4(x, Y)$ and consider the transformation $(x,y) \to (X, Y)$,
where $X$ and $y$ are given by
\begin{align} 
X :=\frac{\pa G_4}{\pa Y} &= x+\frac{i}{2}(-\bar p_3(x-iY, x+iY)(i)+p_{3}(x+iY, x-iY)(-i))
\nonumber \\
&=x+ \Re(p_{3}(x+iY, x-iY))=: x+ \sum_{j+k=3}p_{jk}x^j Y^k, \label{gene.3X}\\
y :=\frac{\pa G_4}{\pa x} & = Y+\frac{i}{2}(-\bar p_3(x-iY, x+iY)+p_{3}(x+iY, x-iY)) 
\nonumber  \\
&=Y - \Im(p_{3}(x+iY, x-iY))=: Y- \sum_{j+k=3}q_{jk}x^j Y^k. \label{gene.3Y}
\end{align}
This defines implicitly a  symplectic coordinate  transformation $(x,y) \mapsto (X, Y)$ 
in a small neighborhood of the origin $(0,0)$  in $\bR^2$. 
It follows from \eqref{gene.3Y} that $Y$ is homogeneous and $\deg(Y)=\frac{1}{2}$,
and then from \eqref{gene.3X} that $X$ is homogeneous and $\deg(X)=\frac{1}{2}$.
Moreover, we get the explicit local expansions from \eqref{gene.3X} and  \eqref{gene.3Y}:
\begin{align}
X&=x+\sum_{j+k=3}p_{jk}x^j y^k+ h.o.t., \label{coor.tran.X}  \\
Y&=y+\sum_{j+k=3}q_{jk}x^j y^k+ h.o.t..  \label{coor.tran.Y}
\end{align}

Consider the new complex coordinate 
$(Z, W):=(X+ i Y, X-iY)$. It is easy to see that
\begin{align*}
Z&= X +i Y = x + iy + \Re(p_{3}(x+iy, x-iy)) + i\Im(p_{3}(x+iy, x-iy)) + h.o.t. \\
&=z + p_3(z, w)+ h.o.t.
\end{align*}
One can check that the image $(Z_1, W_1)$ of $(Z,W)$ under the map $F$  satisfies
\begin{align}
Z_1&=z_1+ p_3(z_1, w_1) + h.o.t.
=\lambda (Z+ i\tau_1 Z^2 W) + h.o.t.  \nonumber\\
&=e^{i\theta}(1+i\tau_1 |Z|^2) Z + h.o.t. 
=e^{i(\theta+ \tau_1 |Z|^2)}Z + h.o.t. \label{symptran}
\end{align}

\section{Birkhoff Normal Form with First Two Twist Coefficients}
\label{sec.tau2}

In this  section we will find a coordinate transformation under which the billiard map $F$
is of Birkhoff normal form with first two twist coefficients at the elliptic point $P$.
To this end, we need to find the fifth-order terms
of the coordinate transformation $(x, y) \to (X, Y)$ given by \eqref{coor.tran.X} and \eqref{coor.tran.Y}
and the Taylor expansion of the function $F$ under the coordinate system $(X, Y)$. 
To start, we assume 
\begin{align}
X&=x+\sum_{j+k=3}p_{jk}x^j y^k+\sum_{j+k=4}p_{jk}x^j y^k
+\sum_{j+k=5}p_{jk}x^j y^k+ h.o.t., \label{gene.5X} \\
Y&=y+\sum_{j+k=3}q_{jk}x^j y^k+\sum_{j+k=4}q_{jk}x^j y^k
+\sum_{j+k=5}q_{jk}x^j y^k+ h.o.t.  \label{gene.5Y}
\end{align} 
Substituting \eqref{gene.5X} and \eqref{gene.5Y} into \eqref{gene.3X} and \eqref{gene.3Y}, 
we see that the terms $p_{jk}$ and $q_{jk}$, $j+k=4$, vanish, 
and the terms $p_{jk}$ and $q_{jk}$, $j+k=5$,  are given by:
\begin{align*}
p_{50} & = p_{21}q_{30};\\
p_{41} & = p_{21}q_{21} +  2p_{12}q_{30};\\
p_{32} & = p_{21}q_{12} +  2p_{12}q_{21} +  3p_{03}q_{30} ;\\
p_{23} & = p_{21}q_{03} +  2p_{12}q_{12} +  3p_{03}q_{21} ;\\
p_{14} & =2p_{12}q_{03} +  3p_{03}q_{12} ;\\
p_{05} & =3p_{03}q_{03}.
\end{align*}

\begin{align*}
q_{50} & = q_{21}q_{30};\\
q_{41} & = q_{21}q_{21} +  2q_{12}q_{30};\\
q_{32} & = q_{21}q_{12} +  2q_{12}q_{21} +  3q_{03}q_{30} ;\\
q_{23} & = q_{21}q_{03} +  2q_{12}q_{12} +  3q_{03}q_{21} ;\\
q_{14} & =2q_{12}q_{03} +  3q_{03}q_{12} ;\\
q_{05} & =3q_{03}q_{03}.
\end{align*}
Note that $p_{jk}$ and $q_{jk}$, $j+k=5$, are homogeneous of degree $-2$.

Suppose the Taylor expansion of the function $F$ in the new coordinate system $(X,Y) \to (X_1, Y_1)$
is given by 
\begin{align}
X_1&=a_{10}X+ a_{01}Y +  \sum_{j+k=3}A_{jk} X^j Y^k 
+ \sum_{j+k=5}A_{jk}X^j Y^k + h.o.t., \label{X1XY}\\
Y_1&=b_{10}X+ b_{01}Y +  \sum_{j+k=3}B_{jk} X^j Y^k 
+ \sum_{j+k=5}B_{jk}X^j Y^k + h.o.t., \label{Y1XY}
\end{align}
with undetermined coefficients $A_{jk}$ and $B_{jk}$, $j+k=3, 5$.
To determine these coefficients, we recall
\begin{align}
X_1= & x_1 + \sum_{j+k=3}p_{jk}x_1^j y_1^k + \sum_{j+k=5}p_{jk}x_1^j y_1^k + h.o.t.  \nonumber\\
=& a_{10}x+ a_{01}y + \sum_{j+k=3}a_{jk}x^j y^k + \sum_{j+k=5}a_{jk}x^j y^k \nonumber \\
&+\sum_{j+k=3} p_{jk}\cdot \substack{
(a_{10}x+ a_{01}y +a_{30}x^3+a_{21}x^2y+a_{12}x y^2+a_{03}y^3)^j\\ 
\cdot(b_{10}x+ b_{01}y +b_{30}x^3+b_{21}x^2y+b_{12}x y^2+b_{03}y^3)^k } \nonumber \\
 &+ \sum_{j+k=5}p_{jk}(a_{10}x+ a_{01}y)^j (b_{10}x+ b_{01}y)^k + h.o.t., \label{X1xy}
\end{align}

\begin{align}
Y_1 =& y_1 + \sum_{j+k=3}q_{jk}x_1^j y_1^k + \sum_{j+k=5}q_{jk}x_1^j y_1^k + h.o.t. \nonumber \\
= & b_{10}x+ b_{01}y + \sum_{j+k=3}b_{jk}x^j y^k + \sum_{j+k=5}b_{jk}x^j y^k \nonumber \\
&+\sum_{j+k=3}q_{jk}\cdot \substack{
(a_{10}x+ a_{01}y +a_{30}x^3+a_{21}x^2y+a_{12}x y^2+a_{03}y^3)^j \\ 
\cdot(b_{10}x+ b_{01}y +b_{30}x^3+b_{21}x^2y+b_{12}x y^2+b_{03}y^3)^k} \nonumber \\
 &+ \sum_{j+k=5}q_{jk}(a_{10}x+ a_{01}y)^j (b_{10}x+ b_{01}y)^k + h.o.t. \label{Y1xy}
\end{align}
Comparing the coefficients of the terms $x^j y^k$ with $j+k=3$ in \eqref{X1XY} with \eqref{X1xy}
and  \eqref{Y1XY} with \eqref{Y1xy}, respectively, 
we have
\begin{align*}
A_{30} & = a_{30}+ p_{30}a_{10}^3 + p_{21}a_{10}^2b_{10} +p_{12}a_{10}b_{10}^2
+p_{03}b_{10}^3 - a_{10}p_{30} - a_{01}q_{30}; \\
A_{21} &=a_{21}+3p_{30}a_{10}^2a_{01}+p_{21}(a_{10}^2b_{01}+2a_{10}a_{01}b_{10})
+p_{12}(2a_{10}b_{10}b_{01}+a_{01}b_{10}^2) \\
&+ 3p_{03}b_{10}^2b_{01} -a_{10}p_{21}-a_{01}q_{21};  \\ 
A_{12} & =a_{12}+3p_{30}a_{10}a_{01}^2+p_{21}(a_{01}^2b_{10}+2a_{10}a_{01}b_{01})
+p_{12}(2a_{01}b_{10}b_{01}+a_{10}b_{01}^2) \\
&+ 3p_{03}b_{10}b_{01}^2 -a_{10}p_{12}-a_{01}q_{12}; \\
A_{03} & = a_{03}+ p_{30}a_{01}^3 + p_{21}a_{01}^2b_{01} +p_{12}a_{01}b_{01}^2
+p_{03}b_{01}^3 - a_{10}p_{03} - a_{01}q_{03}.
\end{align*}

\begin{align*}
B_{30} & = b_{30}+ q_{30}a_{10}^3 + q_{21}a_{10}^2b_{10} +q_{12}a_{10}b_{10}^2
+q_{03}b_{10}^3 - b_{10}p_{30} - b_{01}q_{30}; \\
B_{21} &=b_{21}+3q_{30}a_{10}^2a_{01}+q_{21}(a_{10}^2b_{01}+2a_{10}a_{01}b_{10})
+q_{12}(2a_{10}b_{10}b_{01}+a_{01}b_{10}^2) \\
&+ 3q_{03}b_{10}^2b_{01} -b_{10}p_{21}-b_{01}q_{21};  \\ 
B_{12} & =b_{12}+3q_{30}a_{10}a_{01}^2+q_{21}(a_{01}^2b_{10}+2a_{10}a_{01}b_{01})
+q_{12}(2a_{01}b_{10}b_{01}+a_{10}b_{01}^2) \\
& + 3q_{03}b_{10}b_{01}^2 -b_{10}p_{12}-b_{01}q_{12}; \\
B_{03} & = b_{03}+ q_{30}a_{01}^3 + q_{21}a_{01}^2b_{01} +q_{12}a_{01}b_{01}^2
+q_{03}b_{01}^3 - b_{10}p_{03} - b_{01}q_{03}.
\end{align*}
Note that $\deg(A_{jk})=\deg(B_{jk})=-1$ when $j+k=3$ 
since $\deg(a_{jk})=\deg(b_{jk})=0$ when $j+k=1$
and $\deg(p_{jk})=\deg(q_{jk})=-1$ when $j+k=3$.

Similarly, by comparing the coefficients of the terms $x^j y^k$ with $j+k=5$ 
in \eqref{X1XY} with \eqref{X1xy}
and  \eqref{Y1XY} with \eqref{Y1xy}, respectively,
one can represent $A_{jk}$ and $B_{jk}$, $j+k=5$ in terms of $a_{jk}$, $b_{jk}$,
$p_{jk}$,  $q_{jk}$, $j+k \le 5$ and $A_{jk}$, $B_{jk}$, $j+k=3$.
Moreover, $\deg(A_{jk})=\deg(B_{jk})=-2$ when $j+k=5$.

Now we assume both nonresonance assumptions 
(A1) $\lambda^4\neq 1$ and  (A2) $\lambda^6 \neq 1$.
As we have done when finding the first-step Birkhoff transform in the previous section, 
we consider the complex coordinate system $(Z,W)$ with $Z= X+iY$ and $W=X-iY$. 
Note that the transformation $(x,y) \to (X,Y)$ given in Eq.~\eqref{gene.5X} and \eqref{gene.5Y}
is obtained by specifying the fifth-order terms of
the transformation given in Eq.~\eqref{gene.3X} and \eqref{gene.3Y}.
In particular, the third-order expansion of the function $F$ stays the same:
\begin{align}
Z_1&= X_1+iY_1=
\lambda (Z + i\tau_1 Z^2 W
+ \sum_{j+k=5} C_{jk}Z^j W^k) + h.o.t., \label{tran.in.tau2Z} \\
W_1&= X_1 - iY_1 
= \bar \lambda (W - i\tau_1 W^2 Z +  \sum_{j+k=5} \bar C_{jk}W^j Z^k) + h.o.t..\label{tran.in.tau2W}
\end{align}
Note that $\deg(C_{jk})=-2$ for all $(j,k)$ with $j+k=5$.
For later convenience, we give an explicit formula for the coefficient $C_{32}$ of the term $Z^3W^2$ in \eqref{tran.in.tau2Z}:
\begin{align}
C_{32}
=\bar\lambda
&((10A_{50} -2iA_{41}  +2A_{32} -2iA_{23} +2 A_{14}-10iA_{05}) \nonumber \\
&+i(10B_{50} -2iB_{41}  +2B_{32} -2iB_{23} +2 B_{14}-10iB_{05})). \label{c-32} 
\end{align}

Since the coefficients of the fourth-order terms in the equation
$\frac{\pa(Z_1, W_1)}{\pa(Z,W)}=1$ vanish, we obtain that
\begin{align}
5C_{50}+\bar{C}_{14}=0, \quad
2C_{41} +\bar{C}_{23} =0, \quad
\tau_1^2 + C_{32} +\bar C_{32} =0. \label{jacobi5}
\end{align}
Denote the imaginary part of the term $C_{32}$ by $\tau_2$.
Then we have $C_{32}=-\frac{\tau_1^2}{2}+i\tau_2$.
As we will show later, this quantity $\tau_2$ is the second twist coefficient
of the function $F$ at the point $P$. Note that $\deg(\tau_2) =-2$.

Using the same method as in \S \ref{tau1.intermediate},
we can find a coordinate transformation 
\begin{align*}
Z'&=Z+p_5(Z,W) = Z +\sum_{j+k=5} D_{jk}Z^j W^k,\\
W'&=W+\bar p_5(W,Z)= W +\sum_{j+k=5} \bar{D}_{jk}W^j Z^k.
\end{align*}
such that all the fifth-order terms $C_{jk}'$ of the function $(Z', W') \to (Z_1', W_1')$
vanish except the term $C_{32}'$, which stays the same: $C_{32}'=C_{32}$.
That is, we set $D_{32}=0$, and
\begin{align}
D_{50}=\frac{C_{50}}{1-\lambda^4}, \quad 
D_{41}=\frac{C_{41}}{1-\lambda^2}, \quad 
D_{23}=\frac{C_{23}}{1- \overline{\lambda}^2}=-2\bar{D}_{41}, \\
D_{14}=\frac{C_{14}}{1- \overline{\lambda}^4}=-5\bar{D}_{50}, \quad 
D_{05}=\frac{C_{05}}{1- \overline{\lambda}^6}.
\end{align}
Then we can rewrite the equations \eqref{tran.in.tau2Z} and \eqref{tran.in.tau2W} as
\begin{align}
Z_1'&= \lambda (Z' + i\tau_1 (Z')^2 W' + C_{32}(Z')^3 (W')^2) + h.o.t., \label{norm.5Z} \\
W_1'&= \bar \lambda (W' - i\tau_1 (W')^2 Z' + \bar C_{32}(W')^3 (Z')^2)) + h.o.t.. \label{norm.5W}
\end{align}
Moreover, the function 
\begin{align*}
s_6(Z,W)=D_{50}Z^5W-\bar{D}_{50}Z W^5 
+\frac{1}{2}(D_{41}Z^4W^2 - \bar{D}_{41}Z^2W^4)
+\frac{1}{6}(D_{05}W^6 - \bar{D}_{05}Z^6)
\end{align*}
satisfies $\frac{\pa s_6}{\pa W}=p_5(Z,W)$ and $\frac{\pa s_6}{\pa Z}=-\bar p_5(W,Z)$,
and $g_6(X,Y):=\frac{1}{2}s_6(X+iY, X-iY)$ is a real homogeneous polynomial of the real variables
$X$ and $Y$.
Then using the generating function method as in \S \ref{tau1.symp}, we obtain a  symplectic 
transformation $(X,Y) \to (\cX, \cY)$ of the real variables $(X,Y)$ 
such that in the complex coordinate system
$(\cZ, \cW)=(\cX+ i \cY, \cX - i\cY)$,
the function $F$ takes the same form:
\begin{align}
\cZ_1&= \lambda (\cZ + i\tau_1 \cZ^2 \cW
+ (-\frac{\tau_1^2}{2}+i\tau_2)\cZ^3 \cW^2) + h.o.t.
=e^{i(\theta+\tau_1|\cZ|^2 +\tau_2|\cZ|^4)}\cZ +h.o.t., \\
\cW_1&= \bar \lambda (\cW - i\tau_1 \cW^2 \cZ
+ (-\frac{\tau_1^2}{2} -i\tau_2) \cW^3\cZ^2) + h.o.t.
=e^{-i(\theta+\tau_1|\cZ|^2 +\tau_2|\cZ|^4)}\cW +h.o.t.. 
\end{align}
Here we have used the fact that $C_{32}= -\frac{\tau_1^2}{2}+i\tau_2$.
This explains the geometric meaning of the quantity $\tau_2$
and why it is called the second twist coefficient of the map $F$ at the elliptic point $P$.

Note that $\tau_2$ is the imaginary part of $C_{32}$.
We simplify and obtain the following formula for $\tau_2$:
\begin{align}
\tau_2(F,P)=
&\frac{3 (7 R^2 - 8 R L + 2 L^2)}{256 R^2(R-L)\sqrt{L(2R-L)} }
-\frac{L^{1/2}(27 R^2 - 40 R L +  10 L^2)}{384 R(R-L)(2R-L)^{3/2}} R'' \nonumber \\
&+ \frac{L^{3/2} (31 R^2 - 36 R L +  6 L^2) }{768 (R-L)(2R-L)^{5/2}}(R'')^2
-\frac{L^{3/2}R}{192(2R-L)^{3/2}}R^{(4)}. \label{tau2.sym.rewrite}
\end{align}
This is  the second twist coefficient of the one-step map $F$ of the symmetric table 
at the elliptic point $P$.
We have completed the derivation of the formula \eqref{tau2.sym.F}.
Observe that $\tau_2$ is homogeneous and $\deg(\tau_2)=-2$.

We view $\tau_2$ as a rational function of $(L, R, R'', R^{(4)})$ on the open locus $\{L>0, R>0,  2R-L>0\}$, after introducing the positive square roots $\sqrt{2R-L}$ and $\sqrt{L}$. Then $\tau_2$ has a simple polar divisor along $L=R$. Note that the remainder of $\sqrt{L} R^2 (R-L) (2 R-L)^{5/2}\tau_2$ modulo the factor $L-R$
is $\frac{1}{12}R^4(R R''+3)^2$. It follows from combining with \eqref{c03LR} that
\begin{align*}
\tau_2=-\frac{3 R^4 c_{03}^2}{4 \sqrt{L}(2 R-L)^{5/2}(R-L)}+ \rho(L, R, R'', R^{(4)}),
\end{align*}
for a regular function $\rho$. 
In particular, it follows that
\begin{pro}\label{c03pole}
The second twist coefficient $\tau_2$ has a simple pole along $L=R$ on the locus $\{c_{03} \neq 0\}$. \end{pro}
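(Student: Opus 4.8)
The plan is to read the claim off the decomposition
\[
\tau_2 \;=\; -\,\frac{3 R^4 c_{03}^2}{4\sqrt{L}\,(2R-L)^{5/2}(R-L)} \;+\; \rho(L,R,R'',R^{(4)})
\]
recorded just above, so the substantive work is (i) to justify this decomposition and (ii) to check that its first summand genuinely produces a simple pole. For (i) I would work on the semialgebraic open set $\{L>0,\ R>0,\ 2R-L>0\}$, on which $\sqrt{L}$ and $\sqrt{2R-L}$ are single-valued analytic units; there \eqref{tau2.sym.rewrite} presents $\tau_2$ as a rational expression in $L,R,R'',R^{(4)},\sqrt{L},\sqrt{2R-L}$ whose only denominators vanishing along the hypersurface $\{L=R\}$ are the factors $(R-L)$, each appearing to the first power. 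Multiplying by the clearing factor $M:=\sqrt{L}\,R^2(R-L)(2R-L)^{5/2}$ yields $g:=M\tau_2$, a function regular along $\{L=R\}$, and I would compute $g|_{L=R}$ by substituting $L=R$ term by term in \eqref{tau2.sym.rewrite}: the $R^{(4)}$-summand carries an extra factor $(R-L)$ and drops out, while the three remaining contributions collapse into a perfect square proportional to $(3+RR'')^2$. Finally, invoking the resonance formula \eqref{c03LR}, which on $\{L=R\}$ expresses $3+RR''$ as a nonzero constant multiple of $R\,c_{03}$, rewrites $g|_{L=R}$ as a nonzero constant times $R^6 c_{03}^2$; dividing back by $M$ gives exactly the displayed decomposition.

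Granting the decomposition, step (ii) is immediate. Along $\{L=R\}$ one has $\sqrt{L}\,(2R-L)^{5/2}=\sqrt{R}\cdot R^{5/2}=R^3$, so the coefficient of $(R-L)^{-1}$ in the singular term equals $-\tfrac34 R\,c_{03}^2$, which is analytic and nonzero at every point of $\{L=R\}$ with $c_{03}\neq 0$. Since $\rho$ is regular near $\{L=R\}$ and $(R-L)^{-1}$ has a pole of order exactly one there, $\tau_2$ has a simple pole along $\{L=R\}\cap\{c_{03}\neq 0\}$ and is regular along $\{L=R\}\cap\{c_{03}=0\}$. This is consistent with Remark \ref{loc.ana.int}: on the locus $c_{03}=0$ the Birkhoff transformation remains well defined, so no obstruction should survive, and indeed the would-be pole of $\tau_2$ cancels.

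The only point requiring care is the assertion that $\tau_2$ has \emph{at most} a simple pole along $\{L=R\}$: a sum of terms each carrying a first-order pole could in principle combine into a higher-order one, but the explicit form of \eqref{tau2.sym.rewrite} --- every summand has $(R-L)$ to at most the first power --- rules this out, so the matter is really bookkeeping. A secondary point is to keep track of the branch choices: $\sqrt{L}$ and $\sqrt{2R-L}$ are analytic and nonvanishing near $\{L=R\}$ inside $\{L>0,\ R>0,\ 2R-L>0\}$, hence irrelevant to the pole order. Once these are noted, computing the residue $g|_{L=R}$ is a short symbolic calculation, and the identity $9+6RR''+R^2(R'')^2=(3+RR'')^2$ together with \eqref{c03LR} is precisely what converts the outcome into a statement about $c_{03}$.
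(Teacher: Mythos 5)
Your proposal is correct and follows essentially the same route as the paper: clear denominators by the factor $\sqrt{L}\,R^2(R-L)(2R-L)^{5/2}$, reduce modulo $L-R$ to obtain a multiple of $(RR''+3)^2$, and invoke \eqref{c03LR} to identify this with a nonzero multiple of $R^6c_{03}^2$, yielding the displayed decomposition and hence the simple pole exactly on $\{c_{03}\neq 0\}$. The only quibble is your aside that simple poles "could in principle combine into a higher-order one" (they cannot; the only risk is cancellation to a lower order, which is precisely what the condition $c_{03}\neq 0$ excludes), but this does not affect the argument.
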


\subsection{Twist coefficients of the periodic orbits along the minor axis of an ellipse}
\label{sec.Kol}
Let $\cE$ be an ellipse with eccentricity $e$,
and $\cH$ be a hyperbolic caustic confocal to $\cE$ with eccentricity $h$.
This caustic corresponds to a pair of circles
in the phase space around the periodic orbit $\cO_2$ along the minor axis.
These two circles are permuted under the billiard map $F$
and $F^2$ is a smooth circle diffeomorphism on each of these two circles.
It is known that the rotation number of $F^2$ on either one of the two invariant circles 
is given by
\begin{align}
\rho(F^2, \cH)=\frac{\sF(\frac{\pi}{2}-\frac{\delta}{2},k) - \sF(\frac{\delta}{2},k)}{\sF(\frac{\pi}{2},k)},
\label{kolo.hyp}
\end{align}
where $k=\frac{2\sqrt{h}}{1+h}$, $\delta=\arcsin\frac{e\cdot (h^2-1)}{h^2-e^2}$,
and $\ds \sF(\alpha, k) = \int_0^{\alpha} \frac{1}{\sqrt{1-k^2\sin^2\theta}} d\theta$
is the incomplete elliptic integral of the first kind.
See \cite{Kol85, Tab94} for more details.

We have shown that 
$\tau_1(F,P)=\frac{b}{2}$ for the periodic orbit along the minor axis
($x$-axis) of the ellipse $\frac{x^2}{b^2} + y^2=1$, $0<b<1$.
See Remark \ref{ellipse.resonance} for discussions about the resonance case $b=1/\sqrt{2}$.
Plugging $(R'', R^{(4)})=(3b - \frac{3}{b}, \frac{9}{b} - 6 b - 3 b^3)$
into \eqref{tau2.sym.F}, we have
\begin{align}
\tau_2(F,P)=\frac{3 b (3-2b^2)}{32\sqrt{1 - b^2}},
\end{align}
for $b\in (0,1)\backslash \{\frac{1}{\sqrt{2}}\}$. 
For clarity of the discussion we will assume $b\in (0, \frac{1}{\sqrt{2}})$.
The eigenvalue of $D_{P}F^2$ is given by
\begin{align}
\lambda &=e^{i\theta}
=2(1-\frac{L}{R})^2 -1 + i 2(1-\frac{L}{R})\sqrt{L(\frac{2}{R}-\frac{L}{R^2})} \nonumber \\
&=2(1-2b^2)^2 -1  + i4b(1-2b^2)\sqrt{1-b^2}.
\end{align}
It follows that $\theta=\arccos(2(1-2b^2)^2 -1) \in (0,\pi)$.
Then  the rotation number of $F^2$ on the invariant circle containing a point $\cZ$ is given by
\begin{align}
\hat \rho(F^2, \cZ) = \frac{1}{2\pi}(\theta+b|\cZ|^2 + 2\tau_2(F,P) |\cZ|^4) + h.o.t..
\label{app.rot}
\end{align}

Next we will compare the above two formulas \eqref{kolo.hyp} and \eqref{app.rot} 
about the rotation numbers for orbits tangent to hyperbolic caustics.
Let  $\gamma(t)=(b\cos t, \sin t)$ be the natural parametrization of the ellipse 
$\frac{x^2}{b^2} + y^2=1$,
and $P(t)$ be the point in the phase space whose trajectory
moves along the normal direction $n(t)=(-\cos t, -b\sin t)$ 
at  the base point $\gamma(t)=(b\cos t, \sin t)$.
Then the trajectory of $P(t)$
is tangent to a hyperbolic caustic $\cH$ with eccentricity $h=\frac{1}{|\sin t|}$.
In the following we assume $0<t<\eps$.
It follows from \eqref{kolo.hyp}
that the rotation number of $F^2$ on the $F^2$-invariant circle containing the point 
$P(t)=(\gamma(t), n(t))$ is 
\begin{align}
\rho(F^2, t):=
\rho(F^2, P(t))=\frac{\sF(\frac{\pi}{2}-\frac{\delta(t)}{2},k(t)) - \sF(\frac{\delta(t)}{2},k(t))}{\sF(\frac{\pi}{2},k(t))},
\label{kolo-t}
\end{align}
where $\ds k(t)=\frac{2\sqrt{\sin t}}{1+\sin t}$, 
$\ds \delta(t)=\arcsin\frac{\sqrt{1-b^2}\cdot (1-\sin^2 t)}{1-(1-b^2)\sin^2 t}$.
The arc-length parameter $s$ of the ellipse is given by  $s(t)=\sE(t, \sqrt{1-b^2})$,
where $\sE(\alpha, k)=\int_0^{\alpha}\sqrt{1-k^2\sin^2\theta} d\theta$ is 
the  incomplete elliptic integral of the second kind.
On the other hand, the $(s,u)$-coordinates of the phase point $P(t)$
is $(s(t),0)$. It follows that $(x,y)=(\eta^{-1}s(t),  0)$,
\begin{align}
(X,Y)&=(x+p_{30}x^3 + p_{50}x^5, q_{30}x^3 + q_{50}x^5) + h.o.t.\nonumber \\
&=(\eta^{-1}s +p_{30}\eta^{-3}s^3, q_{30}\eta^{-3}s^3)+ o(t^4),\\
|\cZ|^2 &=\cX^2 + \cY^2= X^2 + Y^2 + o(t^4) 
=\eta^{-2}s(t)^2 +2p_{30}\eta^{-4}s(t)^4  + o(t^4);\\
\hat\rho(F^2, t) &:=\hat\rho(F^2, P(t)) 
= \frac{1}{2\pi}(\theta+b\eta^{-2}s(t)^2 +2(p_{30}b+\tau_2(F,P))\eta^{-4}s(t)^4) 
  + o(t^4).\label{rho.tw-t}
\end{align}
Comparing \eqref{kolo-t} with \eqref{rho.tw-t},
we observe they coincide at least to fourth-order:
\begin{align*}
\rho(F^2, 0) =\hat\rho(F^2, 0) &=\frac{\theta}{2\pi}, \\
\rho^{(j)}(F^2, 0) =\hat\rho^{(j)}(F^2, 0) &=0, \quad j=1,3,\\
\rho^{(2)}(F^2, 0) =\hat\rho^{(2)}(F^2, 0) &=\frac{1}{\pi} b \sqrt{1 - b^2},\\
\rho^{(4)}(F^2, 0) =
\hat\rho^{(4)}(F^2, 0)&=\frac{1}{4\pi} b (17 - 18 b^2) \sqrt{1 - b^2}.
\end{align*}

\section{Twist Coefficients for Periodic Orbits of Asymmetric Billiards}\label{asym.table}

In this section we consider the general case that the two functions 
$a(t)=\sum_{n\ge 1} a_{2n}t^{2n}$ and $b(t)=\sum_{n\ge 1} b_{2n}t^{2n}$ are different.
We will assume $a_2 \ge b_2>0$  without loss of generality.
Let $R_j(s)$ be the radius of curvature along the curve $\gamma_j$, where $s$ is the arc-length
parameter, $j=0, 1$. Note that  $R_j(s)$ is an even function, $j=0,1$,
and $R_0=\frac{1}{2a_2}\le R_1=\frac{1}{2b_2}$.

Let $\cO_2=\{P,F(P)\}$ be the periodic orbit of period 2 that bouncing back and forth
between $\gamma_0(0)$ and $\gamma_1(0)$.
Then the tangent matrix of $F^2$ at $P$ is given by
\begin{align}
D_{P}F^2
=\begin{bmatrix}\frac{L}{R_1}-1 & L \\ \frac{L}{R_0R_1}- \frac{1}{R_0} - \frac{1}{R_1} & \frac{L}{R_0}-1 \end{bmatrix}
\begin{bmatrix}\frac{L}{R_0}-1 & L \\ \frac{L}{R_0R_1}- \frac{1}{R_0} - \frac{1}{R_1} & \frac{L}{R_1}-1 \end{bmatrix} 
=:\begin{bmatrix} \ta_{10} & \ta_{01} \\ \tb_{10} & \tb_{01} \end{bmatrix}, 
\end{align}
where
\begin{align}
\ta_{10} =\tb_{01} &=2(\frac{L}{R_0}-1)(\frac{L}{R_1}-1) -1 
= 2L(\frac{L}{R_0R_1}- \frac{1}{R_0} -\frac{1}{R_1}) +1; \\
\ta_{01} &=2L(\frac{L}{R_1}-1); \\
\tb_{10} &=2(\frac{L}{R_0}-1)(\frac{L}{R_0R_1}- \frac{1}{R_0} - \frac{1}{R_1}).
\end{align}
It follows that the orbit $\cO_2$ is hyperbolic if $L\in (R_0, R_1) \cup(R_0+R_1, \infty)$,
is parabolic if $L\in \{R_0, R_1, R_0+R_1\}$,
and is elliptic if $L\in(0,R_0)\cup (R_1, R_0+R_1)$.
In the following we assume that $\cO_2$ is elliptic.  
Let $\eta=(- \ta_{01}/\tb_{10})^{1/4}$, which is a positive number. 
Then the complex vector $v=\begin{bmatrix} i\eta \\ \eta^{-1} \end{bmatrix}$
is an eigenvector of the matrix $D_{P}F^2$ for the eigenvalue 
\begin{align}
\lambda= \ta_{10}  - i\eta^{-2} \ta_{01}= \ta_{10} +i\eta^2 \tb_{10}.
\end{align}

For convenience, we introduce two frequently used notations:
\begin{align}
\Delta &=L(R_0 -L)(R_1-L)(R_0 +R_1 -L); \\
\Gamma &=(R_0 -L)(R_1-L) - L(R_0 +R_1 -L).
\end{align}

After the coordinate change $(s,u)\to (x,y)= (s/\eta, \eta u)$, we have
\begin{align}
a_{10} =b_{01} =\frac{\Gamma}{R_0R_1};\quad
a_{01}=\eta^{-2} \ta_{01} = -\frac{2\sqrt{\Delta}}{R_0R_1}; \quad
b_{10}=\eta^{2} \tb_{10}  = \frac{2\sqrt{\Delta}}{R_0R_1} = - a_{01}.
\end{align}
In particular, the eigenvalue $\lambda$ satisfies 
$\lambda= a_{10}  - i a_{01}= a_{10} +i b_{10}$.

Following the same approach as in \S \ref{sec.tau1},
we get the  first twist coefficient $\tau_1$ of the iterate $F^2$ at $P$
under the nonresonance assumption (B1) that $\lambda^4\neq 1$:
\begin{align}
\tau_1(F^2, P) &= \frac{1}{8} \Big(\frac{R_0+R_1}{R_0R_1}
-\frac{L}{R_0 +R_1 - L}\Big(\frac{R_1 -L}{R_0 -L}R_0'' + \frac{R_0 -L}{R_1 -L}R_1''\Big) \Big).
\end{align}
This is a special case  of the formula  on Page 303 of \cite{KP05}. 
Note that $\deg(\tau_1)=-1$.

The approach given in \S \ref{sec.tau2} can be used to calculate $\tau_2(F^2, P)$
for asymmetric tables.
However,  it does not lead to an explicit formula for $\tau_2(F^2, P)$. 
We obtain the formula \eqref{tau2.asym.F}  for $\tau_2(F^2, P)$ under an ansatz of the decomposition
of $\tau_2$ into quadratic terms of $R_{j}''$, $j= 0,1$, 
and linear terms of $R_{j}^{(4)}$, $j= 0,1$. See \eqref{tau2.ansatz} for more details.
Then we find an explicit formula \eqref{tau2.asym.F}  for $\tau_2$ 
by finding the coefficients of each terms.
Finally, we verify that this is indeed the correct formula for the second twist coefficient $\tau_2$.

To obtain the Birkhoff normal form containing both twist coefficients $\tau_1$ and $\tau_2$, 
it is necessary to assume both $\lambda^4\neq 1$ and $\lambda^6\neq 1$. 
However, only the assumption $\lambda^4\neq 1$ is used when computing the quantity $\tau_2$.
It follows that the potential poles of $\tau_2$ is determined by $\lambda^4=1$.
Observe that 
\begin{enumerate}
\item $\lambda=1$ when $L(R_0+R_1-L)=0$,

\item $\lambda=-1$ when $(R_0 -L)(R_1 -L)=0$,

\item $\lambda^2=1$ when $\Delta=L(R_0 -L)(R_1 -L)(R_0+R_1 -L)=0$

\item $\lambda^2 =-1$ when $\Gamma =(R_0 -L)(R_1 -L)- L(R_0+R_1 -L)=0$.
\end{enumerate}

Consider the case when $\gamma_0$ is fixed and $\gamma_1$ is flat. 
In this case, the dynamical  billiard on the table bounded by
$\gamma_0$ and $\gamma_1$ is  equivalent to the dynamical billiard on the table bounded by 
$\gamma_0$ and $\hat\gamma_0$, where $\hat\gamma_0$ is the reflection of $\gamma_0$
with respect to the line $\gamma_1$. More precisely, $\hat L = 2L$ and 
$\hat F(s,u)=(-s_2, -u_2)$,
where $(s_2, u_2) = F^2(s,u)$.
Then we substitute $L$ by $2L$ in \eqref{tau2.sym.rewrite} and obtain
\begin{align}
\tau_2(F^2, P) & =\frac{3 (7 R^2 - 16 R L + 8 L^2)}{512 R^2 (R - 2 L) \sqrt{L(R-L)}}
-\frac{L(27 R^2 - 80 R L +  40 L^2)}{768 R (R - 2 L) \sqrt{L(R-L)}(R - L)} R'' \nonumber \\
&+ \frac{L^{2} (31 R^2 - 72 R L +  24 L^2) }{1536 (R - 2 L) \sqrt{L(R-L)}(R - L)^{2}}(R'')^2
-\frac{L^{2} R}{192 \sqrt{L(R-L)}(R - L)}R^{(4)}.
\label{eq: tau_2 R_1 infty}
\end{align}

Another special case is when $\gamma_0$ and $\gamma_1$ are symmetric
as we have done in \S\ref{sec.tau2}.
Since we are considering the twist coefficients along the orbit $\cO_2$,
we multiply \eqref{tau2.sym.rewrite} by $2$ and obtain
\begin{align}
\tau_2(F^2, P) 
&=\frac{3 (7 R^2 - 8 R L + 2 L^2)}{128 R^2 (R -  L) \sqrt{L(2R-L)}}
-\frac{L(27 R^2 - 40 R L +  10 L^2)}{192 R (R - L) \sqrt{L(2R-L)}(2R - L)} R'' \nonumber \\
&+ \frac{L^{2} (31 R^2 - 36 R L +  6 L^2) }{384 (R - L) \sqrt{L(2R-L)}(2R - L)^{2}}(R'')^2
-\frac{L^{2} R}{96 \sqrt{L(2R-L)}(2R - L)}R^{(4)}. \label{eq.tau2.sym}
\end{align}

In the following we will consider the case that $0< L <\min\{R_0, R_1\}$.
We will explain the changes needed for the case $\max\{R_0, R_1\}< L < R_0 +R_1$
in Remark \ref{largeL}.
By analyzing the terms involved in the definition of $\tau_2$, we have that 
$\tau_2$ depends linearly on the fourth-order derivatives $R_0^{(4)}$ and  $R_1^{(4)}$
and quadratic on the second-order derivatives  $R_0^{(2)}$ and  $R_1^{(2)}$.
Moreover, the poles in \eqref{eq: tau_2 R_1 infty} and \eqref{eq.tau2.sym} are 
from the poles of the general case $\tau_2(F^2, P)$ by specializing to $R_1\rightarrow\infty$ and $R_0^{(2i)}=R_1^{(2i)}, i=0,1,2$, respectively.
Therefore, we make the following ansatz on the formulation of $\tau_2$, which has the minimal order of poles for each coefficient of the monomials in $R_0'', R_1'',R_0^{(4)}, R_1^{(4)}$:
\begin{align}
\tau_2&=\frac{1}{\sqrt{\Delta}}\Big(
\frac{3N(L,R_0,R_1)}{512R_0^2R_1^2(2(R_0-L)(R_1 -L)-R_0R_1)} \nonumber \\
&+ \frac{P(L,R_0,R_1)(R_0'')^2+ P(L,R_1,R_0)(R_1'')^2 +Q(L,R_0,R_1)R_0''R_1''}
{1536(R_0-L)^2(R_1 -L)^2(R_0 +R_1 -L)^2(2(R_0-L)(R_1 -L)-R_0R_1)} \nonumber  \\
&- \frac{R_1S(L,R_0,R_1)R_0''+ R_0S(L,R_1,R_0) R_1''}
{768R_0R_1(R_0-L)(R_1 -L)(R_0 +R_1 -L)(2(R_0-L)(R_1 -L)-R_0R_1)} \nonumber  \\
& -\frac{(R_1-L)T(L,R_0,R_1)R_0^{(4)}+ (R_0-L)T(L,R_1,R_0) R_1^{(4)}}
{192(R_0-L)(R_1 -L)(R_0 +R_1 -L)}\Big), \label{tau2.ansatz}
\end{align}
where 
$N(L,R_0,R_1)$, $P(L,R_0,R_1)$, $Q(L,R_0,R_1)$, $S(L,R_0,R_1)$ and $T(L,R_0,R_1)$
are polynomials with rational coefficients, 
whose degrees are determined by the facts that
$\deg L=1$, $\deg R_{j}^{(k)}=1-k$ and $\deg \tau_2=-2$.
It follows that
\begin{align}
\deg N=6,
\deg P=\deg Q=10,
\deg S=7,
\deg T=5.
\end{align}
Moreover, $Q$ is symmetric about $R_0$ and $R_1$: $Q(L, R_0, R_1)= Q(L, R_1, R_0)$.

\begin{remark}\label{largeL}
In the case $\max\{R_0, R_1\}< L < R_0 +R_1$, the formulation for $\tau_2$ is 
essentially the same as in the case $0< L <\min\{R_0, R_1\}$ with one
difference: the factor $\sqrt{\Delta}$ in \eqref{tau2.ansatz} should be replaced by $-\sqrt{\Delta}$.
This follows from matching the signs of the term $R-2L$ in \eqref{eq: tau_2 R_1 infty},
the term $R-L$ in \eqref{eq.tau2.sym}, respectively.  
\end{remark}

\subsection{The polynomial $N$}
Let $\tau_2^{(0)}(L,R_0,R_1)$ be the term that does not involve the
derivatives $R_{j}''$ and $R_{j}^{(4)}$, $j=0,1$ in \eqref{tau2.ansatz}.
That is,
\begin{align}
\tau_2^{(0)}(L,R_0,R_1)=&\frac{1}{\sqrt{\Delta}}
\frac{3N(L,R_0,R_1)}{512R_0^2R_1^2(2(R_0-L)(R_1 -L)-R_0R_1)},
\label{eq.N.def}
\end{align}
where $N(L, R_0, R_1)$ is a degree $6$ homogeneous polynomial that is symmetric in $R_0, R_1$. 
We consider two cases:
\begin{enumerate}
\item Setting $R_0=R$ and letting $R_1\rightarrow\infty$ in \eqref{eq.N.def}, we have
\begin{align*}
\lim_{R_1\rightarrow\infty}\tau_2^{(0)}
=&\lim_{R_1\rightarrow\infty}
\frac{3N(L,R_0,R_1)}{512R^2R_1^4\sqrt{L(R -L)}(R-2L)}
=\frac{3 (7 R^2 - 16 R L + 8 L^2)}{512 R^2 (R - 2 L) \sqrt{L(R-L)}},
\end{align*}
by comparing with \eqref{eq: tau_2 R_1 infty}.
It follows that  
\begin{align}
N(L, R_0, R_1)=R_1^4(7R_0^2-16R_0L+8L^2)+ l.o.t.,
\label{eq.tau20.R1-inf}
\end{align}
where l.o.t. stands lower order terms according to their $R_1$-degrees.

\item  Setting $R_0=R_1=R$ and comparing \eqref{eq.N.def} with \eqref{eq.tau2.sym}, we have 
\begin{align}
N(L, R, R)=&4(7R^2-8RL+2L^2)R^2(2(L-R)^2-R^2) \nonumber \\
=&28R^6-144R^5L+192R^4L^2-96R^3L^3+16R^2L^4. \label{eq.tau20.R}
\end{align}
\end{enumerate}

Since $N(L, R_0, R_1)$ is symmetric with respect to $R_0$ and $R_1$
and with highest degree of $L$ and $R_j$, $j=0,1$ bounded by $4$, we can write $N$ as
\begin{align}
N(L,R_0,R_1)=
\sum_{1\le j \le 3} \alpha_{j} R_0^j  R_1^j L^{6-2j}
+\sum_{0\leq i < j\leq 4:\; 2\le i+j \le 6} \beta_{ij}(R_0^iR_1^j+R_0^iR_1^j)L^{6-i-j}.
\label{eq.N.ab}
\end{align}
It follows from \eqref{eq.tau20.R1-inf} that $\beta_{24}=7$, $\beta_{14}=-16$, $\beta_{04}=8$.
Combining with \eqref{eq.tau20.R}, we have $\alpha_{3}=14$,
$\beta_{23}=-56$,
\begin{align}
\alpha_{2} + 2\beta_{13} = 176; \quad
\beta_{03}+ \beta_{12} = -48; \quad
\alpha_{1}+2\beta_{02}=16.
\end{align}
So we only need to find three unknown coefficients $(\beta_{02}, \beta_{12}, \beta_{13})$.
These unknowns can be found by solving a system of linear equations of \eqref{eq.N.ab} 
using three values of $N(L, R_0, R_1)$ given by \eqref{eq.N.def}. Then we get
\begin{align}
(\beta_{02}, \beta_{12}, \beta_{13})=(8, -32, 48).
\end{align}
Plugging them into \eqref{eq.N.ab} and simplifying it, we get
\begin{align}
N(L, R_0, R_1)
=&
8 L^4 (R_0^2 + R_1^2) - 16 L^3 (R_0^3 + 2 R_0^2 R_1 + 2 R_0 R_1^2 + R_1^3) \nonumber  \\
&+ 8 L^2 (R_0 + R_1)^2 (R_0^2 + 4 R_0 R_1 + R_1^2) \nonumber \\
& - 8 L R_0 R_1 (2 R_0^3 + 7 R_0^2 R_1 + 7 R_0 R_1^2 + 2 R_1^3)+ 
 7 R_0^2 R_1^2 (R_0 + R_1)^2 .  \label{eq.N.final}
\end{align}

\subsection{The polynomials $P$ and $Q$}\label{sec.PQ} 
Let $\tau_2^{(2,2)}(L,R_0,R_1, R_0'', R_1'')$ be the collection of
terms involving $(R_0'')^2$, $(R_1'')^2$ and $R_0''R_1''$ in \eqref{tau2.ansatz}.
That is,
\begin{align}
\tau_2^{(2,2)}
=\frac{P(L, R_0, R_1)(R_0'')^2 + P(L, R_1, R_0)(R_1'')^2 + Q(L,R_0,R_1) R_0'' R_1''}{1536\sqrt{\Delta}(R_0-L)^2(R_1-L)^2(R_0+R_1-L)^2[2(R_0-L)(R_1-L)-R_0R_1]}.
\label{eq.PQ.def}
\end{align}

Consider the case that $R_0=R_1=R$ and $R_0''=R_1''=R''$.
Comparing with \eqref{eq.tau2.sym}, we have
\begin{align*}
\tau_2^{(2,2)}=
&\frac{(2P(L, R, R)+Q(L, R, R))(R'')^2}{1536\sqrt{L(2R-L)}(R-L)^{5}[2(R-L)^2-R^2](2R-L)^{2}}\\
=& \frac{L^{2} (31 R^2 - 36 R L +  6 L^2) }{384 (R - L)\sqrt{L(2R-L)} (2R - L)^{2}}(R'')^2.
\end{align*}
It follows that
\begin{align}\label{eq: tau_2, 2,2, R equal}
2P(L, R, R)+Q(L, R, R)=4L^2 (31 R^2 - 36 R L +  6 L^2)(R-L)^4[2(R-L)^2-R^2].
\end{align}

\noindent\textbf{\ref{sec.PQ}.I. The polynomial $Q$.}
Recall that $Q(L, R_0, R_1)$ is a degree $10$ homogeneous polynomial in $L, R_0, R_1$
and is symmetric in $R_0, R_1$. 
It is easy to see that 
\begin{align}
\frac{Q(L,R_0,R_1)}{\phi(L,R_0,R_1)}
=&\tau_2(L,R_0,R_1,1,1) +\tau_2^{(0)}(L,R_0,R_1) \nonumber \\
&- \tau_2(L,R_0,R_1,1,0) - \tau_2(L,R_0,R_1,0,1), \label{eq.iso.Q}
\end{align}
where
\begin{align*}
\phi(L,R_0,R_1)=1536\sqrt{\Delta}(R_0-L)^2(R_1-L)^2(R_0+R_1-L)^2[2(R_0-L)(R_1-L)-R_0R_1].
\end{align*}
Suppose $\gamma_1(t)$ is given by the graph of the function $b(t)=L-b_2t^2 - b_4 t^4$. 
Then $R_1=\frac{1}{2b_2}$ and $R_1''=6b_2 - \frac{6b_4}{b_2^2}$.
In order to have $R_1''=1$, it suffices to assume $b_4=(b_2- \frac{1}{6})b_2^2$.
Letting $R_1\to\infty$, we have  $b_2\to 0$ and
the curve $\gamma_1$ becomes a flat boundary. It follows that
\begin{align}
\lim_{R_{1}\to\infty}\tau_2(L,R_0,R_1,1,1)
&=\lim_{R_{1}\to\infty}\tau_2(L,R_0,R_1,1,0);\\
\lim_{R_{1}\to\infty}\tau_2(L,R_0,R_1,0,1)
&=\lim_{R_{1}\to\infty}\tau_2^{(0)}(L,R_0,R_1).
\end{align}
Therefore,  the degree of $R_1$ in $Q(L,R_0,R_1)$ is strictly less than 
the degree of $R_1$ in $\phi(L,R_0,R_1)$.
It follows that  the degree of $R_1$ in $Q(L,R_0,R_1)$ is less than or equal to $5$.

We divide the task of finding $Q$ into two steps. First we consider the case that 
$R_0=R_1=R$ and write $Q(L,R,R)=\sum_{j=0}^{8} \alpha_{j} R^j L^{10 - j}$
with  undetermined coefficients $\alpha_j$, $0\le j \le 8$, based on the formula
\eqref{eq: tau_2, 2,2, R equal}.
These unknowns can be found by solving the linear equations 
using  nine values of $Q(L, R, R)$ given by \eqref{eq.iso.Q}:
\begin{align*}
\alpha_{0}=\alpha_{1}=0,
\alpha_{2}=-64,
\alpha_{3}= 384,
\alpha_{4}= -930,
\alpha_{5}= 1160,
\alpha_{6}= -780,
\alpha_{7}= 264,
\alpha_{8}= -34. 
\end{align*}
Plugging them into the expression of $Q$ and simplifying it, we get
\begin{align}
Q(L, R, R) &=-2 L^2 (L - R)^4 R^2 (32 L^2 - 64 L R + 17 R^2). \label{Q.sym}
\end{align}

Next we find the general formula for  $Q(L, R_0, R_1)$.
Since $Q(L, R_0, R_1)$ is symmetric with respect to $R_0$ and $R_1$ with degrees
of $R_0$ and $R_1$ less than or equal to $5$, 
we assume
\begin{align}
Q(R_0, R_1,L)
= &L^8 (c_ {20} (R_0^2 + R_1^2) + c_ {11} R_0 R_1) 
            + L^7 (c_ {30} (R_0^3 + R_1^3) +   c_ {21} (R_0^2 R_1 + R_1^2 R_0)) \nonumber \\
&  + L^6 (c_ {40} (R_0^4 + R_1^4) +   c_ {31} (R_0^3 R_1 + R_1^3 R_0) + 
                  c_ {22} R_0^2 R_1^2) \nonumber \\
 &  +  L^5 (c_ {50} (R_0^5 + R_1^5)  +      c_ {41} (R_0^4 R_1 + R_1^4 R_0)  
              +  c_ {32}(R_0^3 R_1^2 + R_1^3 R_0^2)) \nonumber \\
    &+    L^4 (  c_ {51} (R_0^5 R_1 + R_1^5 R_0)  
         +     c_ {42}(R_0^4 R_1^2 + R_1^4 R_0^2) +     c_ {33}R_0^3 R_1^3) \nonumber \\
  &   +   L^3 (  c_ {52}(R_0^5 R_1^2 + R_1^5 R_0^2) + c_ {43}(R_0^4 R_1^3 + R_1^4 R_0^3))  \nonumber \\
  &   +  L^2 (   c_ {53}(R_0^5 R_1^3 + R_1^5 R_0^3)  +    c_ {44}R_0^4 R_1^4).
 \label{eq.Q.general}
 \end{align}
Comparing \eqref{eq.Q.general} with \eqref{Q.sym}, we get 
\begin{align*}
2c_{20} + c_{11} & = -64, \\
2c_{30} + 2c_{21} & = 384, \\
2c_{40} + 2c_{31} + c_{22} & = -930, \\
2c_{50} + 2c_{41} + 2 c_{32} & =1160, \\
 2c_{51} + 2 c_{42} + c_{33}& = -780, \\
 2 c_{52} + 2 c_{43}& = 264, \\
 2 c_{53} + c_{44}& = -34.
\end{align*}
There are 10 independent coefficients among these 17 unknown coefficients. 
We  find ten data of $Q(L, R_0, R_1)$ using \eqref{eq.iso.Q}
and then produce a system of ten linear equations from \eqref{eq.Q.general}.
Then we get
\begin{align}
(c_{20}, c_{30}, c_{40}, c_{31}, c_{50}, c_{41}, c_{51}, c_{42}, c_{52}, c_{53})
=(0, 0, 0, -192, 0, 64, 0, -162, 0, 0).
\end{align}
Plugging them into \eqref{eq.Q.general} and simplifying it, we  get
\begin{align}
Q(L, R_0, R_1) 
&=- 2L^2 (L - R_0)^2(L-R_1)^2 R_0 R_1 (32 L^2 -32 L(R_0 +R_1) + 17 R_0 R_1).
\label{eq.Q.final}
\end{align}

\noindent\textbf{\ref{sec.PQ}.II. The polynomial $P$. }
It follows from \eqref{eq: tau_2, 2,2, R equal} and \eqref{Q.sym} that
\begin{align}
P(L, R, R) &= L^2 (L - R)^4 (24 L^4 - 192 L^3 R + 456 L^2 R^2 - 384 L R^3 + 79 R^4).
\label{eq.P.sym}
\end{align}

Setting $R_0=R, R_0''=R''$, $R_1''=0$ and letting $R_1\rightarrow\infty$ in \eqref{eq.PQ.def},  
we have
\begin{align}
\tau_2^{(2,2)}
=&\lim_{R_1\rightarrow\infty}\frac{P(L, R, R_1)}{1536R_1^6\sqrt{L(R-L)}(R-L)^2(R-2L)}(R'')^2  \nonumber \\
=&\frac{L^{2} (31 R^2 - 72 R L +  24 L^2) }{1536 (R - 2 L) \sqrt{L(R-L)}(R-L)^2}(R'')^2, \label{eq: tau_2, 2,2, R_1, infty} 
\end{align}
by comparing with \eqref{eq: tau_2 R_1 infty}. This implies
\begin{align}\label{eq: P, R_1 expansion}
P(L, R_0, R_1)= R_1^6(31R_0^2-72R_0L+24L^2)L^2+\sum_{j=5}^{10} R_1^{10-j}K_{j}(R_0, L).
\end{align}

Theoretically, we can use the undetermined coefficients method to find all the coefficients of 
the polynomials $K_{j}(R_0, L)$ in \eqref{eq: P, R_1 expansion}. 
However, it is difficult to stay away from possible degeneracy due to the large number of unknowns. We will manipulate the polynomial $P(L,R_0, R_1)$
and divide the unknowns into different groups of much smaller sizes.
To shorten our notations, we will set $L=1$ and omit the letter $L$ for the moment. Note that
\begin{align*}
\tau_2(R_0,R_1,1,1) + \tau_2(R_0,R_1, -1,-1) 
=2\tau_2^{(0)}+ \frac{2P(R_0,R_1)+2P(R_1,R_0)+2Q(R_0,R_1)}{\phi(R_0, R_1)}.
\end{align*}
It follows that the symmetric part $SP(R_0, R_1)=\frac{P(R_0,R_1)+P(R_1,R_0)}{2}$
of the polynomial $P(R_0,R_1)$ is given by
 \begin{align}
 SP(R_0, R_1)
=\frac{1}{4}[\phi(R_0, R_1)(\tau_2(R_0,R_1,1,1) + \tau_2(R_0,R_1, -1,-1)
 -2\tau_2^{(0)}(R_0,R_1)) -2Q(R_0,R_1)].\label{eq.SP.def}
\end{align}

Let $U=\frac{R_0+R_1}{2}$ and $V=\frac{R_0 - R_1}{2}$. 
Then the symmetric part $SP(R_0, R_1)$ can be written as a polynomial
of $U$ and $V$. Moreover,  the terms with  odd powers of $V$ vanish
since $SP(U,V)$ is a symmetric polynomial of $R_0$ and $R_1$.
Note that when $R_0=R_1=R$, we have $U=R$ and $V=0$.
Then it follows from \eqref{eq.P.sym} that
\begin{align}
SP(U,0)=(1 - U)^4 (24 - 192 U + 456 U^2 - 384 U^3 + 79 U^4).
\end{align}
The general form of the function $SP(U, V)$ can be written as:
\begin{align}
SP(U,V) &=(1 - U)^4 (24 - 192 U + 456 U^2 - 384 U^3 + 79 U^4) \nonumber \\
&+(a_{60}+a_{61}U+a_{62}U^2 + a_{63}U^3 + a_{64}U^4 + a_{65}U^5 + a_{66}U^6)V^2  \nonumber \\
&+(a_{40}+a_{41}U+a_{42}U^2 + a_{43}U^3 + a_{44}U^4)V^4 \nonumber  \\
&+(a_{20}+a_{21}U+a_{22}U^2 )V^6 
+a_{0}V^8. \label{eq.SP.UV}
\end{align}
Note that the equation \eqref{eq.SP.UV} can be viewed as a polynomial of the single variable $V$, where the variable $U$ is treated as a parameter:
\begin{align}
SP(U,V) &=SP(U,0) +a_6(U)V^2 + a_4(U)V^4 + a_2(U)V^6 +a_{0}V^8,  \label{eq.SP.V} \\
a_2(U)&=a_{20}+a_{21}U+a_{22}U^2,  \label{eq.a2}  \\
a_4(U)&=a_{40}+a_{41}U+a_{42}U^2 + a_{43}U^3 + a_{44}U^4, \label{eq.a4} \\
a_6(U)&=a_{60}+a_{61}U+a_{62}U^2 + a_{63}U^3 
+ a_{64}U^4 + a_{65}U^5 + a_{66}U^6.  \label{eq.a6}
\end{align}
We form a system of linear equations from \eqref{eq.SP.V} using  
a set of four values of $SP(R_0,R_1)$ from \eqref{eq.SP.def} with the same $U=(R_0 +R_1)/2$, 
from which we  get a number for $a_0$ and one value for each $a_k(U)$, $k=2,4,6$. 
Then by varying $U$, we obtain linear equations of the coefficients: 
\eqref{eq.a2} for $a_{2k}$, $0\le k \le 2$,
\eqref{eq.a4} for $a_{4k}$, $0\le k \le 4$, and  
\eqref{eq.a6} for $a_{6k}$, $0\le k \le 6$, respectively.
Finally, we  find the unknown coefficients $a_{jk}$ by solving the corresponding linear equations:
\begin{enumerate}
\item $a_0 = -17$;

\item $(a_{20}, a_{21}, a_{22})= (114, -372, 220)$;

\item $(a_{40}, a_{41}, a_{42}, a_{43}, a_{44})= (-41, 548, -1410, 1196, -310)$;

\item $(a_{60}, a_{61}, a_{62}, a_{63}, a_{64}, a_{65}, a_{66})
= (-216, 1152, -2222, 1784, -402, -124, 28)$.
\end{enumerate}

Next we find the antisymmetric part $AP(R_0, R_1)=\frac{P(R_0,R_1)-P(R_1,R_0)}{2}$ 
of the polynomial $P(R_0, R_1)$.
Note that
\begin{align}
\frac{2P(R_0,R_1)}{\phi(R_0, R_1)}
=\tau_2(R_0,R_1,1,0) + \tau_2(R_0,R_1, -1,0) -2\tau_2^{(0)}(R_0,R_1).
\end{align}
It follows that
\begin{align}
AP(R_0,R_1)=\frac{\phi(R_0, R_1)}{2}(\tau_2(R_0,R_1,1,0) + \tau_2(R_0,R_1, -1,0) 
-\tau_2^{(0)}(R_0, R_1))  -SP(R_0,R_1). \label{eq.AP.def}
\end{align}
Note that the polynomial $AP(R_0, R_1)$ can be written as a polynomial
of $U=\frac{R_0+R_1}{2}$ and odd powers of $V=\frac{R_0 - R_1}{2}$:
\begin{align}
AP(U,V) &=
(b_{70}+b_{71}U+b_{72}U^2 + b_{73}U^3 + b_{74}U^4 
+ b_{75}U^5 + b_{76}U^6+ b_{77}U^7)V \nonumber \\
&+(b_{50}+b_{51}U+b_{52}U^2 + b_{53}U^3 + b_{54}U^4 + b_{55}U^5)V^3 \nonumber  \\
&+(b_{30}+b_{31}U+b_{32}U^2 +b_{33}U^3)V^5
+(b_{10}+b_{11}U)V^7. \label{eq.AP.b}
\end{align}
Note that the terms with  even powers of $V$ vanish
since $AP(U,V)$ is an antisymmetric polynomial of $R_0$ and $R_1$.
Using the same method as we have done for the symmetric part $SP(U, V)$,
we view \eqref{eq.AP.b} as a polynomial of $V$ with coefficients 
$b_{k}(U)$, $k=1,3,5,7$, depending on the parameter $U$.
By varying $V$ with fixed $U$,
we get  the coefficients $b_{k}(U)$, $k=1,3,5,7$ using the values of $AP(R_0, R_1)$ given by \eqref{eq.AP.def}.
The unknowns $b_{jk}$ can be found by solving the systems of linear equations
using the values of $b_{k}(U)$ that we have just found:
\begin{enumerate}
\item $(b_{10}, b_{11}) = (-20, -28)$;

\item $(b_{30}, b_{31}, b_{32}, b_{33}) = (220, -660, 556, -164)$; 

\item $(b_{50}, b_{51}, b_{52}, b_{53}, b_{54}, b_{55}) 
= (-336, 2016, -4472, 4584, -2204, 412)$;

\item $(b_{70}, b_{71}, b_{72}, b_{73}, b_{74}, b_{75}, b_{76}, b_{77}) 
= (0, -288, 1872, -4896, 6556, -4692, 1668, -220)$.
\end{enumerate}

Collecting the symmetric and the antisymmetric parts and simplifying it, we have
\begin{align}
P(R_0,R_1)
=(R_1-1)^4 (&48 R_0^3 (R_1-2) + 24 (R_1-1)^2 - 72 R_0(R_1-1)(R_1-2) \nonumber \\
  & + R_0^2 (216 - 216 R_1 + 31 R_1^2)). 
\end{align}
Since $P$ is a homogeneous polynomial of degree $10$, we have
\begin{align}
P(L,R_0,R_1)
=L^2(R_1-L)^4 (&48 R_0^3 (R_1-2L) + 24 L^2(R_1-L)^2 - 72L R_0(R_1-L)(R_1-2L) \nonumber \\
  & + R_0^2 (216L^2 - 216 R_1L + 31 R_1^2)). \label{eq.P.final}
\end{align}

\subsection{The polynomial $S$}
Let $\tau_2^{(2)}$ be the collection of terms depending linearly on $R_0''$ and $R_1''$
in \eqref{tau2.ansatz}.
That is, 
\begin{align}
\tau_2^{(2)}=&-\frac{S(L, R_0, R_1) R_1 R_0''
+S(L, R_1, R_0)R_0 R_1''}{768\sqrt{\Delta}R_0 R_1(R_0-L)(R_1-L)(R_0+R_1-L)[2(R_0-L)(R_1-L)-R_0R_1]},
\label{eq.S.def}
\end{align}
where $S(L, R_0, R_1)$ is a degree $7$ homogeneous polynomial in $L, R_0, R_1$ (not necessarily symmetric in $R_0, R_1$). 
Consider two special cases:

(1) Setting $R_0=R$, $R_0''=R''$, $R_1''=0$ and letting $R_1\rightarrow\infty$
in \eqref{eq.S.def}, 
we get  that
\begin{align*}
\tau_2^{(2)}&=\lim_{R\rightarrow\infty}-\frac{1}{768}\frac{S(L, R, R_1)}{R R_1^4\sqrt{L(R-L)}(R-L)(R-2L)}R'' \\
&=-\frac{L(27 R^2 - 80 R L +  40 L^2)}{768 R \sqrt{L(R-L)}(R - 2 L) (R - L)} R''
\end{align*}
by comparing with \eqref{eq: tau_2 R_1 infty}.
This implies
\begin{align}\label{eq: S, R_1, expansion}
S(L, R_0, R_1)= R_1^4L(27 R_0^2 - 80 R_0 L +  40 L^2)
+ l.o.t.,
\end{align}
where l.o.t. means terms whose $R_1$-degree is less than or equal to $3$.

(2) Setting $R_0=R_1=R$ and $R_0''=R_1''=R''$, we get that
\begin{align*}
\tau_2^{(2)}&
=-\frac{1}{384}\frac{S(L, R, R)}{\sqrt{L(2R-L)}(R-L)^3(2R-L)(2(R-L)^2-R^2)R} R'' \\
&=-\frac{L(27 R^2 - 40 R L +  10 L^2)}{192 R \sqrt{L(2R-L)}(R - L) (2R - L)} R''
\end{align*}
 by comparing \eqref{eq.S.def} with \eqref{eq.tau2.sym}.
This implies
\begin{align}\label{eq: S, R, R, L}
S(L, R, R)=2L(27 R^2 - 40 R L +  10 L^2)(R-L)^2[2(R-L)^2-R^2].
\end{align}
We divide the task into two steps by finding the symmetric part $SS(L, R_0, R_1)$ and the anti-symmetric part $AS(L, R_0, R_1)$ of  the polynomial $S(L, R_0, R_1)$, respectively.

In the following we assume $L=1$, $R^{(4)}_{j}=0$, $j=0,1$ 
and introduce the short notation $\tau_2(R_0,R_1,R_0'',R_1'')$ for the corresponding
second twist coefficient of the orbit $\cO_2$. 
Similarly, we set $\Phi(R_0, R_1)=-768\sqrt{\Delta}(R_0-1)(R_1-1)(R_0+R_1-1)[2(R_0-1)(R_1-1)-R_0R_1]$ and obtain
\begin{align*}
S(R_0, R_1)&=(\tau_2(R_0,R_1,1,0)-\tau_2^{(0)}(R_0,R_1)-\tau_2^{(22)}(R_0,R_1,1,0))\Phi(R_0, R_1)R_0, \\
S(R_1, R_0)&=(\tau_2(R_0,R_1, 0,1)-\tau_2^{(0)}(R_0,R_1)-\tau_2^{(22)}(R_0,R_1,0,1))\Phi(R_0, R_1)R_1.
\end{align*}
So the symmetric part $SS(R_0,R_1)=\frac{S(R_0,R_1)+S(R_1,R_0)}{2}$ of the polynomial $S$ is given by
\begin{align}
SS(R_0,R_1)=\Big(
&(\tau_2(R_0,R_1,1,0)-\tau_2^{(0)}(R_0,R_1)-\tau_2^{(22)}(R_0,R_1,1,0))R_0 \nonumber \\
+&(\tau_2(R_0,R_1, 0,1)-\tau_2^{(0)}(R_0,R_1)-\tau_2^{(22)}(R_0,R_1,0,1))R_1
\Big)\frac{\Phi(R_0, R_1)}{2}.
\end{align}

We rewrite \eqref{eq: S, R, R, L} in the coordinates 
$(U, V)=(\frac{R_0+R_1}{2},\frac{R_0-R_1}{2})$ with undetermined coefficients:
\begin{align}
SS(U,V)&=2(27U^2 - 40 U +  10)(U-1)^2(2(U-1)^2-U^2) \nonumber \\
&+(a_{50}+a_{51}U+a_{52}U^2 + a_{53}U^3 + a_{54}U^4 + a_{55}U^5)V^2 \nonumber  \\
&+(a_{30}+a_{31}U+a_{32}U^2 +a_{33}U^3)V^4
+(a_{10}+a_{11}U)V^6.
\end{align}
Using the same method in finding the polynomial $SP$, we find that
\begin{align}
SS(U,V)&=2 (27 U^2 - 40 U + 10) (U - 1)^2 (2 (U - 1)^2 - U^2) \nonumber  \\
&+ (-152 +  600 U - 822 U^2 + 488 U^3 - 108 U^4) V^2 
+ (48 - 84 U +     54 U^2) V^4.
\end{align}

Next we turn to the antisymmetric part $AS(R_0,R_1)=\frac{S(R_0,R_1)-S(R_1,R_0)}{2}$,
which is given by
\begin{align}
AS(R_0,R_1)=\Big(
&(\tau_2(R_0,R_1,1,0)-\tau_2^{(0)}-\tau_2^{(22)}(1,0))R_0 \nonumber  \\
- &(\tau_2(R_0,R_1, 0,1)-\tau_2^{(0)}-\tau_2^{(22)}(0,1))R_1
\Big)\frac{\Phi(R_0, R_1)}{2}.
\end{align}
We assume 
\begin{align}
AS(U,V)
&=(b_{60}+b_{61}U+b_{62}U^2 + b_{63}U^3 + b_{64}U^4 + b_{65}U^5 + b_{66}U^6)V \nonumber  \\
&+(b_{40}+b_{41}U+b_{42}U^2 + b_{43}U^3 + b_{44}U^4)V^3
+(b_{20}+b_{21}U+b_{22}U^2 )V^5
+b_0V^7.
\end{align}
Using  the same method in finding the polynomial $AP$, we find that
\begin{align}
AS(U,V)
=&(-144U+552U^2 -714U^3 +360U^4 -54U^5 )V \nonumber  \\
&+(-104+ 394U -400U^2 +108U^3)V^3
+(40-54)V^5.
\end{align}
Collecting the symmetric and antisymmetric parts and simplifying it, we have 
\begin{align}
S(R_0, R_1)=&(R_1 -1 )^2 (40 ( R_1 -1)^2 + 3 R_0^3 (9 R_1-16)  \nonumber \\
&  -  80 R_0 (2 - 3 R_1 + R_1^2) + 3 R_0^2 (56 - 56 R_1 + 9 R_1^2)).
\end{align}
Recall that $S(L,R_0, R_1)$ is a homogeneous polynomial of degree $7$. It follows that
\begin{align}
S(L, R_0, R_1)=&L(R_1 -L)^2 (40 ( R_1 -L)^2L^2 + 3 R_0^3 (9 R_1-16L)  \nonumber \\
&  -  80 R_0 (2L^2 - 3 R_1L + R_1^2)L + 3 R_0^2 (56L^2 - 56 R_1L + 9 R_1^2)).
\end{align}

\subsection{The polynomial $T$}
Let $\tau_2^{(4)}$ be the collection of
terms involving $R_0^{(4)}$ and $R_1^{(4)}$ in \eqref{tau2.ansatz}.
That is, 
\begin{align}
\tau_2^{(4)}=&-\frac{1}{192\sqrt{\Delta}}(\frac{T(L, R_0, R_1)}{(R_0+R_1-L)(R_0-L)}R_0^{(4)}+\frac{T(L, R_1, R_0)}{(R_0+R_1-L)(R_1-L)}R_1^{(4)}),
\label{eq.T.def}
\end{align}
where $T(L, R_0, R_1)$ is a degree $5$ homogeneous polynomial, which is not necessarily symmetric in $R_0, R_1$. Let $\tau_2(L,R_0,R_1,R_0^{(4)},R_1^{(4)})$ be the second twist coefficient of the orbit $\cO_2$
with $R_{j}''=0$, $j=0,1$. It follows that
\begin{align}
T(L, R_0, R_1) =-192\sqrt{\Delta}(R_0+R_1-L)(R_0-L)
\cdot (\tau_2(L,R_0,R_1,1,0) - \tau_2^{(0)}). \label{eq.T.data}
\end{align}

Consider the two special cases:

(1) Setting $R_0=R_1=R, R_0^{(4)}=R_1^{(4)}=R^{(4)}$ in \eqref{eq.T.def}, we have that
\begin{align}
&\tau_2^{(4)}
=-\frac{T(L, R, R)}{96\sqrt{L(2R-L)}(R-L)^2(2R-L)}R^{(4)}
=- \frac{L^{2} R}{96 \sqrt{L(2R-L)}(2R - L)}R^{(4)}, \nonumber
\end{align}
 by comparing with \eqref{eq.tau2.sym}.
It follows that
\begin{align}
T(L, R, R)=(R-L)^2L^2R.  \label{eq: T, RRL}
\end{align}

(2) Setting $R_0=R$, $R_{0}^{(4)}=R^{(4)}$, $R_1^{(4)}=0$ 
and letting $R_1\rightarrow\infty$ in \eqref{eq.T.def}, 
we have that 
\begin{align*}
\tau_2^{(4)}
=\lim_{R_1\rightarrow\infty}-\frac{1}{192}\frac{T(L, R, R_1)}{R_1^2\sqrt{L(R -L)}(R -L)}R^{(4)}
=-\frac{L^{2} R}{192 \sqrt{L(R -L)}(R - L)}R^{(4)},
\end{align*}
by comparing with \eqref{eq: tau_2 R_1 infty}.
It follows that 
\begin{align}
\label{eq: R_1 infty, T} T(L, R_0, R_1)=R_1^2(L^2R_0)+R_1\cdot G_4(R_0, L)+G_5(R_0, L),
\end{align}
where $G_4$ and $G_5$ are homogeneous polynomial in $R_0, L$ of degree $4$ and $5$ respectively.
Comparing \eqref{eq: T, RRL}  and \eqref{eq: R_1 infty, T}, we get 
\begin{align*}
&(R-L)^2L^2R=R^3L^2+R\cdot G_4(R, L)+G_5(R,L),
\end{align*}
or equally,
\begin{align*}
 -2R^2L^3+RL^4=R\cdot G_4(R, L)+G_5(R,L).
\end{align*}
This implies that $G_5(R,L)$ is divisible by $R$, i.e. $G_5(R_0, L)=R_0 H_4(R_0, L)$. 
Now we have 
\begin{align}
\label{eq: sum G_4, H_4}
G_4(R,L)+H_4(R,L)=-2RL^3+L^4.
\end{align}
Suppose that
\begin{align*}
G_4(R,L)=\sum\limits_{j=0}^4a_jR^jL^{4-j},\quad
H_4(R,L)=\sum\limits_{j=0}^4b_jR^jL^{4-j},
\end{align*}
with undetermined coefficients $a_j$ and $b_j$, $0\le j \le 4$.
Then \eqref{eq: sum G_4, H_4} implies that
\begin{align*}
&a_0+b_0=1,\quad a_1+b_1=-2,\\
&a_j+b_j=0, \quad 2\leq j\leq 4. 
\end{align*}
These unknowns can be found by solving the linear equations of \eqref{eq: R_1 infty, T} using the values of $T(L, R_0, R_1)$ given by \eqref{eq.T.data}:
\begin{align*}
(a_0, a_1, a_2, a_3, a_4) = (0, -2, 0, 0, 0). 
\end{align*}
Plugging them into \eqref{eq: R_1 infty, T} and simplifying it, we get
\begin{align}
T(L, R_0, R_1)=L^2R_0(R_1-L)^2. \label{eq.T.final}
\end{align}

\section*{Acknowledgments}
The authors would like to thank Richard Moeckel for helpful discussions.
The authors are very grateful to the anonymous referees for many comments and suggestions,
which have helped them improve the presentation of the paper greatly.


\begin{thebibliography}{s1}


\bibitem{AnKa} D. V. Anosov, A. B. Katok.
\emph{New Examples in Smooth Ergodic Theory, Ergodic Diffeomorphisms}.
{Trudy Mosk. Math. Obs.} 23 (1970), 3--36; 
{Trans. Mosc. Math. Soc.} 23 (1972), 1--35 (English translation). 
 

\bibitem{Bir27} G.D. Birkhoff.
\emph{Dynamical systems}. 
Amer. Math. Soc. Colloquium Publ.  IX , Amer. Math. Soc. (1927). 


\bibitem{BuGr} L. Bunimovich, A. Grigo.
\emph{Focusing components in typical chaotic billiards should be absolutely focusing}. 
{Comm. Math. Phys.} {293} (2010), 127--143.



\bibitem{CMZZ} J. Chen, L. Mohr, H.-K. Zhang, P. Zhang.
\emph{Ergodicity and coexistence of elliptic islands in a family of convex billiards}.
{Chaos} {23} (2013), 043137.



\bibitem{ChMa} N. Chernov, R. Markarian.
\emph{Chaotic Billiards}.
Math. Surveys Mono. 127, AMS, Providence, RI, 2006. 



\bibitem{CGM} A. Cima, A. Gasull, V. Ma\~{s}osa.
\emph{Global periodicity conditions for maps and recurrences via normal forms}. 
Internat. J. Bifur. Chaos Appl. Sci. Engrg. 23 (2013), no. 11, 1350182, 18 pages. 


\bibitem{DKP03}  M. J. Dias Carneiro, S. O. Kamphorst,  S. Pinto de Carvalho.
\emph{Elliptic Islands in Strictly Convex Billiards.} 
{Ergod. Th. Dyn. Sys.} 23, no. 3 (2003), 799--812.


\bibitem{FK09} B. Fayad and R. Krikorian,
\emph{Herman's last geometric theorem}. 
{ Ann. Sci. \'Ec. Norm. Sup\'er.}  {42} (2009), 193--219. 


\bibitem{HeTo} E. Heller and S. Tomsovic.
\emph{Postmodern quantum mechanics}.
{Physics Today} {46} (1993), 38--46.

\bibitem{Ito89} H. Ito.
\emph{Convergence of Birkhoff normal forms for integrable systems}.
Comment. Math. Helv. 64 (1989), 412--461.


\bibitem{Ito92} H. Ito.
\emph{Integrability of Hamiltonian systems and Birkhoff normal forms in the simple resonance case}.
Math. Ann. 292 (1992), 411--444.


\bibitem{JZ21} X. Jin, P. Zhang.
\emph{Mathematica codes for manipulations used in this article}.
\url{https://math.ou.edu/~pzhang/Files/twistCoeff.html}.



\bibitem{KP01} S. O. Kamphorst,  S. Pinto-de-Carvalho.
\emph{Elliptic islands on the elliptical stadium}. 
Discrete Contin. Dynam. Syst. 7 (2001), no. 4, 663--674.



\bibitem{KP05} S. O. Kamphorst,  S. Pinto-de-Carvalho.
\emph{The first Birkhoff coefficient and the stability of 2-periodic orbits on billiards}.
 {Experiment. Math.}  14 (2005), no. 3, 299--306. 
 

\bibitem{KKN98} T. Kappeler, Y. Kodama,  A. Nemethi.
\emph{On the Birkhoff normal form of a completely
integrable Hamiltonian system near a fixed point with resonance}.
Ann. Scuola Norm. Sup. Pisa Cl. Sci. 26 (1998), 623--661.


\bibitem{Kir19} R. Krikorian.
\emph{On the divergence of Birkhoff Normal Forms}.
arXiv:1906.01096.



\bibitem{Kol85} R. Ko{\l}odziej.
\emph{The rotation number of some transformation related to billiards in an ellipse}.
{Studia Math.} 81 (1985), no. 3, 293--302. 


\bibitem{Mey70} K. R. Meyer.
\emph{Generic bifurcation of periodic points}.
Trans. Amer. Math. Soc. 149 (1970), 95--107. 

\bibitem{Mey71} K. R. Meyer.
\emph{Generic stability properties of periodic points}.
 Trans. Amer. Math. Soc. 154 (1971), 273--277. 


\bibitem{Moe90} R. Moeckel.
\emph{Generic bifurcations of the twist coefficient}.
 {Ergod. Th. Dynam. Sys.} {10} (1990), 185--195.
 
 
\bibitem{Mos56}  J. K. Moser.
\emph{The analytic invariants of an area-preserving mapping near a hyperbolic fixed point}.
Comm. Pure Appl. Math. 9 (1956), 673--692.  

 
\bibitem{Mos62}  J. K. Moser.
\emph{On invariant curves of area-preserving mappings of an annulus}.
{Nachr. Akad. Wiss. Gottingen Math.-Phys. Kl.} II (1962), 1--20. 


\bibitem{Mos73}  J. K. Moser.
\emph{Stable and Random Motions in Dynamical Systems.
With special emphasis on celestial mechanics}. 
Reprint of the 1973 original. Princeton, NJ, 2001.

\bibitem{Rus02} H. Russmann. 
\emph{Stability of elliptic fixed points of analytic area-preserving mappings under the Bruno condition}.
 {Ergod. Th. Dynam. Sys.} 22 (2002), 1551--1573.


\bibitem{SiMo} C. L. Siegel, J. K. Moser.
\emph{Lectures on Celestial Mechanics}.
Berlin: Springer-Verlag, 1971.

\bibitem{Tab05} S. Tabachnikov.
\emph{Geometry and billiards}. 
Student Mathematical Library, 30. American Mathematical Society, Providence, RI; Mathematics Advanced Study Semesters, University Park, PA, 2005.


\bibitem{Tab94} M. B. Tabanov. 
\emph{Separatrices splitting for Birkhoff's billiard in symmetric convex domain, closed to an
ellipse}.
Chaos, 4 (1994), 595--606.


\bibitem{Tor02} P.J. Torres.
\emph{Twist solutions of a Hill's equation with singular term}. 
Adv. Nonlinear Stud. 2 (2002), no. 3, 279--287.


\bibitem{Zun05} N. T.  Zung.
\emph{Convergence versus integrability in Birkhoff normal form}.
Annals  Math 161 (2005), 141--156.


\end{thebibliography}
\end{document}